\newcommand{\ip}[2]{\left\langle #1 , #2 \right\rangle}    
\newcommand{\V}{{\mathcal V}}
\newcommand{\R}{{\mathbb R}}
\renewcommand{\S}{{\mathbb S}}
\newcommand{\sym}{{\mathrm{sym}}}
\newcommand{\Sym}{{\mathrm{Sym}}}
\newcommand{\vertiii}[1]{{\vert\kern-0.25ex\vert\kern-0.25ex\vert #1 
    \vert\kern-0.25ex\vert\kern-0.25ex\vert}}
\DeclareMathOperator{\dist}{dist}
\DeclareMathOperator{\interior}{int}
\DeclareMathOperator{\Image}{Im}
\newtheorem{theorem}{Theorem}
\newtheorem{proposition}{Proposition}
\newtheorem{corollary}{Corollary}
\newtheorem{example}{Example}
\def\transp{^{\text{\sf T}}}
\newcommand{\matr}[1]{\begin{bmatrix} #1 \end{bmatrix}}    
\newcommand{\I}{\mathcal I}
\title{A data-independent distance to infeasibility for linear conic systems}
\author{Javier Pe\~na\thanks{Tepper School of Business,
Carnegie Mellon University, USA, {\tt jfp@andrew.cmu.edu}} \and Vera Roshchina\thanks{School of Mathematics and Statistics, UNSW Sydney, Australia, {\tt v.roshchina@unsw.edu.au}}}
\begin{document}

\maketitle

\begin{abstract} We offer a unified treatment of distinct measures of well-posedness for homogeneous conic systems.  To  that end, we introduce a distance to infeasibility based entirely on geometric considerations of the elements defining the conic system.  
Our approach sheds new light on and connects several well-known condition measures for conic systems, including {\em Renegar's} distance to infeasibility, the {\em Grassmannian} condition measure, a measure of the {\em most interior} solution, and other geometric measures of {\em symmetry} and of {\em depth} of the conic system.

\medskip

\noindent{\bf AMS Subject Classification:} 	
65K10, 
65F22, 
90C25  

\medskip

\noindent{\bf Keywords:}
Condition number, conic programming, distance to infeasibility, convex duality.

\end{abstract}

\newpage

\section{Introduction}

The focus of this work is the geometric interpretation and coherent unified treatment of measures of well-posedness for homogeneous conic problems. We relate these different measures via a new geometric notion of a distance to infeasibility. 

The development of condition measures in optimization
was pioneered by Renegar \cite{isitpossible,Rene95a,Rene95} and has been further advanced by a number of scholars.
Condition measures provide a fundamental tool to study various aspects of problems such as the behavior of solutions, robustness and sensitivity analysis~\cite{canovas16,NuneF98,Pena00, Rene94}, and performance of algorithms~\cite{BellFV09,EpelF00,Freu04,OrdoF03,PenaS16,Rene95}.  
Renegar's condition number for conic programming is defined in the spirit of the classical matrix condition number of linear algebra, and is explicitly expressed in terms of the distance to infeasibility, that is, the smallest perturbation on the data defining a problem instance that renders the problem infeasible~\cite{Rene95a,Rene95}. By construction, Renegar's condition number is inherently data-dependent.  A number of alternative approaches for condition measures are defined in terms of the intrinsic geometry of the problem  and independently of its data representation.  Condition measures of this kind include the {\em symmetry} measure studied by Belloni and Freund~\cite{BellF08}, the {\em sigma} measure used by Ye~\cite{Ye94}, and the {\em Grassmannian} measure introduced by Amelunxen and B\"urgisser~\cite{AmelB12}  which extends a construction of Belloni and Freund~\cite{BellF09a}.  In addition, other condition measures such as the ones used by Goffin~\cite{Goff80}, Cheung and Cucker~\cite{CheuC01}, Cheung et al.~\cite{CheuCP10}, and by Pe\~na and Soheili~\cite{PenaS16} are defined in terms of {\em most interior solutions}.

At a fundamental level, a main goal of a condition measure is to capture the  ``difficulty'' or ``tractability'' of a problem.
The variety of condition measures for optimization reflects the challenges in achieving this goal.  This is not surprising since the actual difficulty of a problem generally depends on the representation and solution methods available.  A particular condition number would typically yield overly conservative bounds on quantities of interest, such as geometric properties of the solution set or the convergence rate of an algorithm, if those quantities are invariant under some transformations but the condition number is not.  The development of various kinds of condition measures can be attributed to this tension between condition measures and invariance under different kinds of transformations.  The central goal of this paper is to shed new light on and relieve this tension.  To achieve that goal, we focus our attention on the following three minimal components of a conic system: the convex cone and linear subspace that define the conic system, and an underlying norm in the ambient space.  Our approach enables us to uncover and highlight common ideas and differences underlying the most popular condition measures for conic systems, reveals some extensions, and establishes close relationships among them.  

We define a {\em data-independent} distance to infeasibility that depends solely on the above three minimal components (cone, linear subspace, and norm).  In the particular case when the norm is the Euclidean norm, the data-independent distance to infeasibility coincides with the Grassmannian condition measure introduced by Belloni and Freund~\cite{BellF09a} and further extended by Amelunxen and B\"urgisser~\cite{AmelB12}.  The latter concept in turn is closely related to the {\em angular separation criterion} proposed by Flinth~\cite{Flin16} to formalize stability and robustness properties that lie at the heart of sparse signal recovery~\cite{AmelLMT14,Cahi16, CahiM14, CandRT06,CandP10,ChanRPW12}.  
However, we should emphasize that our construction of a data-independent distance to infeasiblity applies to any norm (not necessarily Euclidean).  The flexibility of working with non-Euclidean norms and more general non-Euclidean geometries has led to major advances in optimization, particularly in first-order algorithms~\cite{SraNW12,Tebo18}.  Non-Euclidean norms typically fit the geometry of the problem more naturally, prime examples being the one-norm for the non-negative orthant and the nuclear norm for the positive semidefinite cone.  The flexibility in the choice of norms is a main novelty in our construction and a key feature for most of our developments.  In particular, the flexibility in the choice of norm enables us to establish a number of interesting connections with other geometric properties of the conic systems such as a measure of {\em symmetry} and a measure of {\em depth} of the conic system.  Our derivation of these connections in turn provides new interesting insight into some canonical {\em induced eigenvalue mappings} and {\em induced norms} associated to the cone defining the conic system.  The latter objects are tied to the structural properties of the cone and play central roles in optimization models and algorithms.  Two canonical examples of induced norms and their duals are the infinity and one norms in $\R^n$ induced by the non-negative orthant $\R^n_+$, and the operator norm and nuclear norm in $\S^n$ induced by the positive semidefinite cone $\S^n_+$.  

Our developments highlight the tradeoffs of different notions of conditioning.  That kind of insight in turn suggests {\em preconditioning} and {\em reconditioning} techniques to improve the well-posedness of a problem.
The former type of technique (preconditioning) can be applied to preprocess the data representing a problem so that the problem is ``better posed''.  
 Although preprocessing procedures are routinely used by optimization solvers, they are not always founded on a formal theory.  The latter type of technique (reconditioning) can be seen as an adaptive variant of preconditioning that transforms a problem as new information is gathered.  This type of reconditioning technique underlies a variety of {\em rescaling algorithms} such as the rescaled perceptron algorithm of Dunagan and Vempala~\cite{DunaV06}, the more recent Chubanov's rescaling and projection algorithm~\cite{Chub15} and a number of subsequent related developments~\cite{BellFV09,KitaT18,LourKMT16,PenaS16,Roos18}.  Most of these algorithms are based on alternating between a {\em basic procedure} and a {\em rescaling procedure}.  The  basic procedure attempts to solve the problem and succeeds if the problem is well conditioned.  If it does not succeed, then it provides guidance for the rescaling procedure to recondition the problem so that the basic procedure can be applied again to an equivalent but better conditioned problem.  A similar alternating principle  also underlies a variety of increasingly popular {\em restarting techniques} for first-order algorithms for convex optimization~\cite{RoulD17}.

We focus on the feasibility problems that can be represented as the intersection of a closed convex cone with a linear subspace.
Feasibility problems of this form are pervasive in optimization.  The  constraints of linear, semidefinite, and more general conic programming problems are written explicitly as the intersection of a (structured) convex cone with a linear (or, more generally, affine) subspace. The fundamental signal recovery property in compressed sensing can be stated precisely as the infeasibility of a homogeneous conic system for a suitable choice of a cone and linear subspace as explained in~\cite{AmelLMT14,ChanRPW12}.
 Our {\em data-independent} distance to infeasibility is a measure of proximity between the orthogonal complement of this linear subspace and the dual cone.  This distance depends only on the norm, cone, and linear subspace.  Specific choices of norms lead to interpretations of this distance as the Grassmannian measure \cite{AmelB12} as well as a measure of the most interior solution \cite{CheuCP10}.  Our approach also yields neat two-way bounds between the sigma measure \cite{Ye94} and symmetry measure \cite{BellF08,BellF09b} in terms of this geometric distance.
Our work is inspired by~\cite{AmelB12}, and is similar in spirit to an abstract setting of convex processes \cite[Section~5.4]{BorwL06} (also see \cite{DontLR03}). For a more general take on condition numbers for unstructured optimization problems and for an overview of recent developments we refer the reader to \cite{zolezzi}.

The main sections of the paper are organized as follows.  
We begin by defining our data-independent distance to infeasibility in Section~\ref{sec.measures}, where we also show that it coincides with the Grassmannian distance of \cite{AmelB12} for the Euclidean norm. In Section~\ref{sec.renegar} we discuss Renegar's distance to infeasibility and show in  Theorem~\ref{thm.renegar}  that the ratio of the geometric distance to infeasibility and Renegar's distance is sandwiched between the reciprocal of the norm of the matrix and the norm of its set-valued inverse, hence  extending~\cite[Theorem 1.4]{AmelB12} to general norms. In Section~\ref{sec.induced.norm} we show that the cone induced norm leads to the interpretation of the distance to infeasibility in terms of the most interior solution (Proposition~\ref{prop.eigenvalue}). We also provide further interpretation as eigenvalue estimates for the cone of positive semidefinite matrices and for the nonnegative orthant. 

In Section~\ref{sec:sigma} we propose an extension of the sigma measure of Ye   and establish bounds relating the sigma measure and the distance to infeasibility (Proposition~\ref{corol.nu.sigma}).  
Section~\ref{sec.symmetry.meas} relates our distance infeasibility and the sigma measure to the symmetry measure used by Belloni and Freund via neat symmetric bounds in Theorem~\ref{thm.symmetry} and Corollary~\ref{corol.symmetry}.  Finally, Section~\ref{sec.extend} describes extensions of our main developments via a more flexible choice of norms.

\section{Data-independent distance to infeasibility}
\label{sec.measures}

Let $E$ be a finite dimensional real vector space with an inner product $\ip{\cdot}{\cdot}$, endowed with a (possibly non-Euclidean) norm $\|\cdot\|$. Recall that the dual norm $\|\cdot\|^*$ is defined for $u \in E$ as
$$\|u\|^* := \max_{\|x\|=1}\ip{u}{x}.$$
Notice that by construction, the following  {\em H\"older's inequality} holds for all $u,x\in E$
\begin{equation}\label{eq.holder}
|\ip{u}{x}| \le \|u\|^* \cdot \|x\|.
\end{equation}

Let $K\subseteq E$ be a closed convex cone.  Given a linear subspace $L\subseteq E$, consider the feasibility problem
\begin{equation}\label{primal}
\text{ find} \; x\in L\cap K \setminus\{0\}
\end{equation}
and its alternative
\begin{equation}\label{dual}
\text{ find }  u \in L^\perp \cap K^* \setminus\{0\}.
\end{equation}
Here $K^*$ denotes the {\em dual cone} of $K$, that is,
\[
K^*:=\{u\in E: \ip{u}{x} \ge 0  \; \forall x\in K\},
\]
and $L^\perp$ is the {\em orthogonal complement} of the linear subspace $L$,
\[
L^\perp:= \{u\in E: \ip{u}{x} = 0  \; \forall x\in L\}.
\]

In what follows we assume that $K\subseteq E$ is a closed convex cone that is also \emph{regular},  that is,  $\interior(K) \neq \emptyset$ and $K$ contains no lines. In our analysis the cone $K$ is fixed, and the linear subspace $L$ is treated as the problem instance. This is a standard approach that stems from the real-world models, where the cone is a fixed object with well-known structure that encodes the model's structure (for instance, the nonnegative orthant, the cone of positive semidefinite matrices, copositive or hyperbolicity cone), and the problem instance is encoded via the coefficients of a linear system that in our case corresponds to the linear subspace.  

Observe that \eqref{primal} and \eqref{dual} are {\em alternative systems:} one of them has a strictly feasible solution if and only if the other one is infeasible. When neither problem is strictly feasible, they both are {\em ill-posed:} each problem becomes infeasible for arbitrarily small perturbations of the linear subspace.

The main object of this paper is the following data-independent {\em distance to infeasibility} of~\eqref{primal}:
\begin{equation}\label{eq:defcondition}
\nu(L):=\min_{u\in K^*, y\in L^\perp\atop\|u\|^*=1} \|u-y\|^*.
\end{equation}
Observe that $\nu(L)\geq 0$ and  $L\cap \interior(K) \ne \emptyset$ if and only if $\nu(L) >0$. Furthermore, $\nu(L)$ is the distance between the space $L^\perp$ and the set $\{u\in K^*: \|u\|^* = 1\}$, or equivalently between $L^\perp$ and $\{u\in K^\circ: \|u\|^* = 1\}$ for $K^\circ = -K^*$, as illustrated in Figure~\ref{fig:definition}.
\begin{figure}[ht]
	\centering
\begin{overpic}[width=0.6\textwidth
]{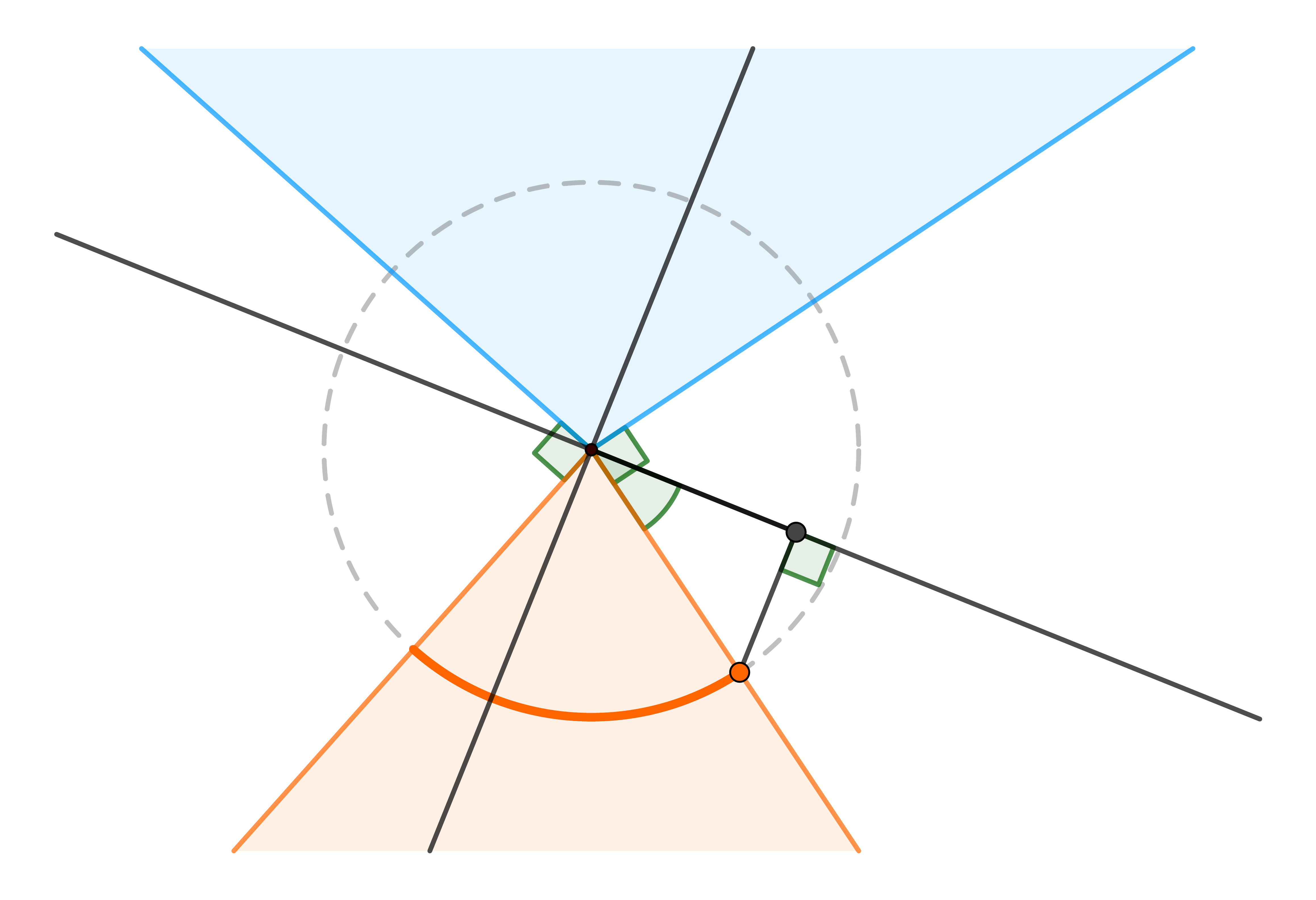}
 \put (27,55) {$K$}
 \put (52,7) {$K^\circ$}
 \put (55,55) {$L$}
 \put (82,20) {$L^\perp$}
 \put (61,31) {$\bar y$}
 \put (58,15) {$\bar u$}
\end{overpic}
	\caption{Illustration of $\nu(L)$ when $\nu(L)>0$. Here  $\bar u$ and $\bar y$ denote the points attaining the minimum in \eqref{eq:defcondition}, so that $\nu(L) = \|\bar u-\bar y\|^*$.}
	\label{fig:definition}
\end{figure}
Since both~\eqref{primal} and~\eqref{dual} are defined via a cone and a linear subspace, there is a natural symmetric version of distance to infeasibility for \eqref{dual} obtained by replacing $K^*$, $L^\perp$ and $\|\cdot \|^*$ in \eqref{eq:defcondition} with their primal counterparts.

When the norm $\|\cdot \|$ is Euclidean, that is, $\|v\| = \|v\|^*= \|v\|_2 = \sqrt{\ip{v}{v}}$, the distance to infeasibility \eqref{eq:defcondition} coincides with the {\em Grassmann distance to ill-posedness} defined by Amelunxen and B\"urgisser~\cite{AmelB12}. To see this, first observe that the Euclidean norm is naturally related to angles.  Given $x,y\in E\setminus\{0\}$ let $\angle(x,y) := \arccos \frac{\ip{x}{y}}{\|x\|_2\|y\|_2} \in [0,\pi]$.  Given a linear subspace $L\subseteq E$ and a closed convex cone $C\subseteq E$, let
\[
\angle(L,C) := \min\{\angle(x,v): x \in L\setminus\{0\}, \; v \in C\setminus\{0\}\} \in [0,\pi/2].
\]

\begin{proposition}  \label{prop.euclidean} If $\|\cdot\| = \|\cdot\|_2$ then 
\[
\nu(L) = \sin\angle(L^\perp,K^*).
\]
\end{proposition}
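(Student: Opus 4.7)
The plan is to reduce both sides of the claimed equality to the common quantity $\min\{\|P_L u\|_2 : u \in K^*,\; \|u\|_2 = 1\}$, where $P_L$ and $P_{L^\perp}$ denote the orthogonal projections onto $L$ and $L^\perp$ respectively. Since $\|\cdot\|^* = \|\cdot\|_2$ in the Euclidean case, for each unit $u\in K^*$ the inner minimization $\min_{y\in L^\perp}\|u-y\|^*$ in \eqref{eq:defcondition} is the Euclidean distance from $u$ to the subspace $L^\perp$, attained at $y=P_{L^\perp}u$. By Pythagoras, $\|u-P_{L^\perp}u\|_2 = \|P_L u\|_2$, so
\[ \nu(L) \;=\; \min_{u\in K^*,\; \|u\|_2=1}\; \|P_L u\|_2. \]

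For the right-hand side, fix $u\in K^*$ with $\|u\|_2=1$ and consider $\min\{\angle(y,u) : y\in L^\perp\setminus\{0\}\}$. Because $L^\perp$ is a subspace (so $-y\in L^\perp$ whenever $y\in L^\perp$), this minimum lies in $[0,\pi/2]$, and its cosine equals $\max_{y\in L^\perp,\;\|y\|_2=1}\ip{y}{u} = \|P_{L^\perp}u\|_2$. Using $\|P_L u\|_2^2 + \|P_{L^\perp}u\|_2^2 = \|u\|_2^2 = 1$, the corresponding sine equals $\|P_L u\|_2$.

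Finally, since $\angle(L^\perp,K^*)$ is an infimum of angles that all lie in $[0,\pi/2]$ and $\sin$ is monotone there, I can interchange $\sin$ with $\min$ and conclude
\[ \sin \angle(L^\perp,K^*) \;=\; \min_{u\in K^*,\;\|u\|_2=1}\; \|P_L u\|_2 \;=\; \nu(L). \]
The argument is largely bookkeeping once both sides are identified with the minimum of $\|P_L u\|_2$ over the unit sphere intersected with $K^*$. The only subtlety to flag is that the restriction $\angle(L^\perp,K^*)\in[0,\pi/2]$ relies on the sign symmetry of $L^\perp$; without it, maximizing $\ip{y}{u}$ over unit $y\in L^\perp$ would not correspond to minimizing the angle, and the exchange of $\sin$ with $\min$ would fail.
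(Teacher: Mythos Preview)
Your proof is correct and follows essentially the same route as the paper's. The paper's argument is simply the terse chain
\[
\sin\angle(L^\perp,K^*)=\min_{u\in K^*,\,y\in L^\perp\atop u,y\ne 0}\sin\angle(y,u)=\min_{u\in K^*,\,y\in L^\perp\atop \|u\|_2=1}\|u-y\|_2=\nu(L),
\]
which implicitly rests on exactly the two facts you spell out: monotonicity of $\sin$ on $[0,\pi/2]$ to push it through the minimum, and, for each unit $u$, the Pythagorean identity $\min_{y\in L^\perp}\|u-y\|_2=\|P_L u\|_2=\min_{y\in L^\perp\setminus\{0\}}\sin\angle(y,u)$. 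Your version just makes the intermediate quantity $\|P_L u\|_2$ explicit.
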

\begin{proof}
Since $\angle(L^\perp,K^*) \in [0,\pi/2]$ we have
\[
\sin\angle(L^\perp,K^*) = \min_{u\in K^*,y\in L^\perp \atop u,y\ne 0}  \sin \angle(y,u) 
= \min_{u\in K^*,y\in L^\perp\atop \|u\|_2=1}  \|u-y\|_2 
= \nu(L).
\]
\end{proof}

Proposition~\ref{prop.euclidean} and~\cite[Proposition 1.6]{AmelB12} imply that when $\|\cdot\| = \|\cdot\|_2$ the distance to infeasibility $\nu(L)$ matches the Grassmann distance to ill-posedness of~\cite{AmelB12}.  The flexibility in the choice of norm in $E$ is a main feature in our construction of $\nu(L)$ as some norms are naturally more compatible with the cone.   Suitable choice of norms generally yield sharper results in various kinds of analyses.  In particular, in condition-based complexity estimates an appropriately selected norm typically leads to tighter bounds. The articles~\cite{CCPMultifold,PenaS16} touch upon this subject, and consistently in \cite{canovas16} a sup-norm is deemed a convenient choice for the perturbation analysis of linear programming problems.

\medskip

We will rely on the following characterization of $\nu(L)$.

\begin{proposition}\label{prop.nu} 
If $L$ is a linear subspace of $E$ and $L\cap \interior(K)\ne \emptyset$ then the distance to infeasibility \eqref{eq:defcondition} can be equivalently characterized as
\[
\nu(L) =
\min_{u\in K^* \atop \|u\|^*=1} \max_{x\in L\atop \|x\|\le 1} \ip{u}{x}.
\]
\end{proposition}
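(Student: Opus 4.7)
The plan is to recognize each side of the identity as the distance (in the dual norm) from a point to the subspace $L^\perp$, expressed in two equivalent ways, and then to take the outer minimum over $u$. The core ingredient is the classical duality between the distance to a closed linear subspace and its annihilator: for any subspace $M\subseteq E$ and any $u\in E$,
\[
\min_{y\in M}\|u-y\|^* \;=\; \max_{x\in M^\perp,\,\|x\|\le 1} \ip{u}{x},
\]
where one uses that the dual of $\|\cdot\|^*$ is $\|\cdot\|$ in finite dimensions. Applying this with $M=L^\perp$, so that $M^\perp=L$, gives for every $u\in K^*$ with $\|u\|^*=1$ the identity
\[
\min_{y\in L^\perp}\|u-y\|^* \;=\; \max_{x\in L,\,\|x\|\le 1}\ip{u}{x}.
\]
Minimizing both sides over $u\in K^*$ with $\|u\|^*=1$ then yields the claim, since the left-hand side is $\nu(L)$ by definition~\eqref{eq:defcondition}.

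For completeness I would sketch the subspace duality itself. The inequality $(\ge)$ is immediate: for any $x\in L$ with $\|x\|\le 1$ and any $y\in L^\perp$, H\"older's inequality~\eqref{eq.holder} together with $\ip{y}{x}=0$ yields $\|u-y\|^* \ge \ip{u-y}{x} = \ip{u}{x}$, and the claim follows after taking the supremum over $x$ and the infimum over $y$. The reverse inequality $(\le)$ is the content of Hahn--Banach: extend the linear functional $x\mapsto \ip{u}{x}$ from $L$ to all of $E$ while preserving its dual norm, and identify the norm-preserving extension with a vector $\tilde u\in E$; then $y:=u-\tilde u\in L^\perp$ and $\|u-y\|^* = \|\tilde u\|^* = \max_{x\in L,\,\|x\|\le 1}\ip{u}{x}$. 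Alternatively, one can write $\|u-y\|^* = \max_{\|x\|\le 1}\ip{u-y}{x}$ and swap the min and max using Sion's minimax theorem, after observing that the inner optimization over $y\in L^\perp$ forces $x\in L$.

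The main (minor) obstacle is the nontrivial direction of the subspace duality; everything else is bookkeeping. The hypothesis $L\cap\interior(K)\ne\emptyset$ is not needed for the identity itself and only serves to guarantee $\nu(L)>0$, i.e., to rule out the ill-posed case where both sides trivially vanish.
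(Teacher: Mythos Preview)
Your proposal is correct and follows essentially the same route as the paper: both reduce to the subspace duality $\min_{y\in L^\perp}\|u-y\|^* = \max_{x\in L,\,\|x\|\le 1}\ip{u}{x}$ and then minimize over $u\in K^*$ with $\|u\|^*=1$. The paper establishes this identity via your ``alternative'' (Sion's minimax theorem applied to $\min_{y\in L^\perp}\max_{\|x\|\le 1}\ip{u-y}{x}$, after which the inner minimum over $y$ forces $x\in L$) rather than Hahn--Banach, and your observation that the hypothesis $L\cap\interior(K)\ne\emptyset$ is not actually used in the argument is accurate.
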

\begin{proof}
The construction of the dual norm and 
Sion's minimax theorem \cite{Sion}  imply that for all $u\in E$ 
\begin{equation}\label{nu.eq.1}
\min_{y\in L^\perp} \|u-y\|^* = \min_{y\in L^\perp}\max_{x\in E\atop \|x\| \le 1}\ip{u-y}{x}=
\max_{x\in E\atop \|x\| \le 1}\min_{y\in L^\perp}\ip{u-y}{x}.
\end{equation}
Next, observe that for all $x\in E$
\begin{equation}\label{nu.eq.2}
\min_{y\in L^\perp} \ip{-y}{x} = \left\{\begin{array}{lll}-\infty & \text{ if } x\not\in L\\
0 & \text{ if } x\in L.
\end{array}\right.
\end{equation}
Putting~\eqref{nu.eq.1} and~\eqref{nu.eq.2} together we get
\[
\min_{y\in L^\perp} \|u-y\|^* = \max_{x\in L\atop \|x\| \le 1}\ip{u}{x}.
\]
Therefore $\nu(L) = \displaystyle\min_{u\in K^*, y\in L^\perp \atop \|u\|^*=1} \|u-y\|^*=
\min_{u\in K^* \atop \|u\|^*=1} \max_{x\in L\atop \|x\|\le 1} \ip{u}{x}.$
\end{proof}

We conclude this section by briefly noting two natural variants $\overline \nu(L)$ and $\mathcal V(L)$ of $\nu(L)$.  To ease our exposition, we defer a more detailed discussion of these variants to Section~\ref{sec.extend}.  
The variant $\overline \nu(L)$ is obtained by normalizing $y$ instead of $u$, that is,
\[
\overline\nu(L):=\min_{\substack{u\in K^*, y\in L^\perp\\\|y\|^*=1}} \|u-y\|^*.
\]
The second variant incorporates additional flexibility by allowing the use of different norms in the normalization of $u$ and in the difference $y-u$, that is,
\[
\V(L):=\min_{\substack{u\in K^*, y\in L^\perp\\\|u\|^*=1}} \vertiii{u-y}^*.
\]
where $\vertiii{\cdot}$ is an additional norm in $E$.

\section{Renegar's distance to infeasibility}
\label{sec.renegar}
We next relate the condition measure $\nu(\cdot)$ with the classical Renegar's distance to infeasibility.  A key conceptual difference between Renegar's approach and the approach used above is that Renegar~\cite{Rene95a,Rene95}  considers conic feasibility problems where the linear spaces $L$ and $L^\perp$ are explicitly defined as the image and the kernel of the adjoint of some linear mapping. 

For  a linear mapping $A:F\rightarrow E$ between two normed real vector spaces $F$ and $E$ consider the conic systems \eqref{primal} and \eqref{dual} defined by taking $L=\Image(A)$. 
These two conic systems can respectively be written as 
\begin{equation}\label{renegar.primal}
Ax \in K \setminus\{0\}
\end{equation}
and \begin{equation}\label{renegar.dual} 
 A^*w = 0, \; w\in K^*  \setminus\{0\}.
\end{equation}
Here $A^*:E\rightarrow F$ denotes the {\em adjoint} operator of $A$, that is, the linear mapping satisfying $\ip{y}{Aw} = \ip{A^*y}{w}$ for all $y\in E, w\in F.$

Let $\mathcal L(F,E)$ denote the set of linear mappings from $F$ to $E$. Endow $\mathcal L(F,E)$ with the operator norm, that is,
\[
\|A\|:=\max_{w\in F\atop |w| \le 1} \|Aw\|,
\]
where $|\cdot|$ is the norm in $F$.

Let $A\in \mathcal L(F,E)$ be such that \eqref{renegar.primal} is feasible.  The {\em distance to infeasibility} of \eqref{renegar.primal} is defined as
\begin{align*}
\dist(A,\I) &:= \inf\left\{\|A - \tilde A\|: \tilde Ax \in K\setminus\{0\} \; \text{ is infeasible}\right\}
\\ 
& \; = \inf\left\{\|A - \tilde A\|: \tilde A^* w = 0 \; \text{ for some } w \in K^*\setminus \{0\}\right\}.
\end{align*}
Observe that~\eqref{renegar.primal} is strictly feasible if and only if $\dist(A,\I) > 0$.

Given $A  \in \mathcal L(F,E),$ let  $A^{-1}: \Image(A)\rightrightarrows F$ be the set-valued mapping defined via $x \mapsto \{w\in F: Aw = x\}$ and 
\[
\|A^{-1}\|:=\max_{x\in \Image(A)\atop \|x\| \le 1} \min_{w\in A^{-1}(x)} |w|.
\]

The following result is inspired by and extends~\cite[Theorem 1.4]{AmelB12} and~\cite[Theorem 3.1]{BellF09a}. More precisely,~\cite[Theorem 1.4]{AmelB12} and~\cite[Theorem 3.1]{BellF09a} coincide with Theorem~\ref{thm.renegar} in the special case $\|\cdot\| = \|\cdot\|_2$.

\begin{theorem} 
\label{thm.renegar}
Let $A \in \mathcal L(F,E)$ be such that \eqref{renegar.primal} is strictly feasible and let $L:=\Image(A)$.  Then
\[
\frac{1}{\|A\|} \le \frac{\nu(L)}{\dist(A,\I)} \le \|A^{-1}\|.
\]
\end{theorem}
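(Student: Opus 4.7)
The plan is to reduce both $\nu(L)$ and $\dist(A,\I)$ to minimization problems over the same normalized set $\{u \in K^* : \|u\|^* = 1\}$, and then to compare the two integrands via the operator norms $\|A\|$ and $\|A^{-1}\|$.  Since $L = \Image(A)$, these comparisons come down to inspecting how the image of the unit ball of $F$ relates to the unit ball of $L$.

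First I would establish the dual characterization
\[
\dist(A,\I) = \min_{u \in K^*,\;\|u\|^* = 1} |A^*u|^*,
\]
where $|\cdot|^*$ denotes the dual of the norm $|\cdot|$ on $F$.  This is a non-Euclidean version of Renegar's duality: for fixed $u$ with $\|u\|^*=1$, the condition $(A-\Delta)^* u = 0$ is equivalent to $\Delta^* u = A^* u$, and the smallest $\|\Delta\| = \|\Delta^*\|$ compatible with this is realized by a rank-one operator $\Delta^* = (A^*u)\,\ell(\cdot)$ with $\ell \in E^*$, $\ip{\ell}{u}=1$, and $\|\ell\| = 1/\|u\|^* = 1$ (the latter being a Hahn--Banach consequence of the defining formula for $\|u\|^*$).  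Proposition~\ref{prop.nu} then supplies the parallel formula
\[
\nu(L) = \min_{u \in K^*,\;\|u\|^* = 1}\, \max_{x \in L,\; \|x\| \le 1}\, \ip{u}{x}.
\]
With these two identities in hand, both inequalities reduce to comparing $|A^*u|^*$ with $\max_{x \in L,\,\|x\|\le 1}\ip{u}{x}$ for an arbitrary $u \in K^*$ of unit dual norm.

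For the upper bound $\nu(L) \le \|A^{-1}\|\,\dist(A,\I)$, I would pick, for each $x \in L$ with $\|x\|\le 1$, a preimage $w \in A^{-1}(x)$ with $|w| \le \|A^{-1}\|$, which exists by the definition of $\|A^{-1}\|$.  Then H\"older's inequality~\eqref{eq.holder} on $F$ gives
\[
\ip{u}{x} = \ip{u}{Aw} = \ip{A^*u}{w} \le |A^*u|^* \cdot |w| \le \|A^{-1}\| \cdot |A^*u|^*,
\]
and maximizing over $x$ and minimizing over $u$ finishes this half.  For the lower bound $\dist(A,\I) \le \|A\|\,\nu(L)$, I would instead start from the dual-norm identity $|A^*u|^* = \max_{|w|\le 1}\ip{u}{Aw}$ and observe that each $Aw$ with $|w|\le 1$ lies in $L$ and satisfies $\|Aw\| \le \|A\|$; hence
\[
|A^*u|^* \le \|A\| \cdot \max_{x \in L,\;\|x\|\le 1}\ip{u}{x},
\]
and minimizing over $u$ yields $\dist(A,\I) \le \|A\|\nu(L)$.

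The principal obstacle is Step~1, the dual characterization of $\dist(A,\I)$ outside the Euclidean setting, since the orthogonality shortcuts used in \cite{AmelB12,BellF09a} are no longer available and the optimal rank-one perturbation must be built by a Hahn--Banach normalization.  Once this characterization is secured, the two remaining inequalities are mirror-image consequences of H\"older's inequality together with the definitions of $\|A\|$ and $\|A^{-1}\|$.
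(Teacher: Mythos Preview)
Your proposal is correct and uses essentially the same ingredients as the paper's proof: Proposition~\ref{prop.nu} for $\nu(L)$, a rank-one Hahn--Banach perturbation to witness small $\dist(A,\I)$, and the preimage bound via $\|A^{-1}\|$.  The only difference is organizational: you first isolate the dual characterization $\dist(A,\I) = \min_{u\in K^*,\,\|u\|^*=1} |A^*u|^*$ as a standalone identity and then compare both minima termwise, whereas the paper interleaves the two halves---constructing the rank-one perturbation directly from the minimizer $\bar u$ of $\nu(L)$ for one inequality, and working with an arbitrary infeasible $\tilde A$ (and its associated $\bar u \in \ker(\tilde A^*)\cap K^*$) for the other---without ever stating the dual formula for $\dist(A,\I)$ explicitly.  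Your packaging is arguably cleaner, but the argument is the same.
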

\begin{proof}
First, we prove $\dist(A,\I) \le \nu(L) \|A\|.$  To that end, let $\bar u \in K^*$ be such that $\|\bar u\|^* =1$ and 
$
\nu(L) = \displaystyle\max_{x\in L \atop \|x\|\le 1} \ip{\bar u}{x}
$ as in Proposition~\ref{prop.nu}.
Then
\begin{equation}\label{eq:relAstar}
 |A^*\bar u|^* = \max_{w\in F \atop |w|\le 1} \ip{\bar u}{Aw} \le \nu(L) \|A\|.
\end{equation}
Let $\bar v\in E$ be such that $\|\bar v\| = 1$ and $\ip{\bar u}{\bar v} = \|\bar u\|^* =1$.  
Now construct $\Delta A: F \rightarrow E$ as follows
\[
\Delta A(w):= - \ip{A^* \bar u}{w} \bar v.
\]
Observe that $\|\Delta A\| = |A^*\bar u|^* \cdot \|\bar v\| \le \nu(L)\|A\|$ (by \eqref{eq:relAstar}) and $\Delta A^*: E \rightarrow F$ is defined by
\[
\Delta A^*(y) = - \ip{y}{\bar v} A^*\bar u.
\]
In particular $(A+ \Delta A)^*\bar u = A^* \bar u - \ip{\bar u}{\bar v} A^*\bar u = 0$ and $\bar u \in K^*\setminus \{0\}.$  Therefore
\[
\dist(A,\I) \le \|\Delta A\| \le \nu(L)\|A\|.
\]
Next, we prove   
$\nu(L)\le \| A^{-1}\|\dist(A,\I) $. To that end, suppose  
$\tilde A\in \mathcal L(F,E)$ is such that $\ker(\tilde A^*) \cap K^*\setminus\{0\} \ne \emptyset.$   Let $\bar u \in K^*$ be such that $\|\bar u \|^* = 1$ and $\tilde A^*(\bar u) = 0$.  From the construction of $\|A^{-1}\|$, it follows that for all $x\in L=\Image(A)$ there exists $w\in A^{-1}(x)$ such that $|w| \le \|A^{-1}\| \cdot \|x\|$.  Since $\bar u \in K^*$ and $\|\bar u \|^*=1,$ Proposition~\ref{prop.nu} implies that
\[
\nu(L) \le \max_{x\in \Image(A) \atop \|x\| \le 1} \ip{\bar u}{x} \le 
\max_{w\in F \atop |w| \le \|A^{-1}\|} \ip{\bar u}{Aw} 
= \|A^{-1}\|\cdot |A^*\bar u|^*.
\]
Next, observe that $|A^*\bar u|^* =  |(\tilde A-A)^* \bar u|^* \le \|\tilde A- A\|$ because $\|\bar u\|^* =1$ and $\tilde A^* \bar u = 0$.  Thus $\nu(L) \le \|A^{-1}\|\cdot \|\tilde A - A\|$.
Since this holds for all $\tilde A\in \mathcal L(F,E)$ such that $\ker(\tilde A^*) \cap K^*\setminus\{0\} \ne \emptyset$ it follows that 
\[
\nu(L) \le \|A^{-1}\| \dist(A,\I).
\]
\end{proof}

 Proposition~\ref{thm.renegar.again} in Section~\ref{sec.extend} below discusses an analogue of Theorem~\ref{thm.renegar} for the case when $L = \ker(A)$ for some linear map $A:E\rightarrow F$.  We defer that discussion to Section~\ref{sec.extend} because Proposition~\ref{thm.renegar.again} relies on the variant $\overline \nu(L)$ of $\nu(L)$.

\section{Induced norm and induced eigenvalue mappings}
\label{sec.induced.norm}
In addition to our assumption that $K \subseteq E$ is a regular closed convex cone, throughout the sequel we assume that $e\in \interior(K)$ is fixed.  We next describe a norm $\|\cdot\|_e$ in $E$ and a mapping $\lambda_{e}:E\rightarrow \R$ induced by the pair $(K,e)$.  These norm and mapping yield a natural alternative interpretation of $\nu(L)$ as a measure of the {\em most interior} solution to the feasibility problem $x \in L\cap \interior(K)$ when this problem is feasible.  

Define the norm $\|\cdot\|_e$ in $E$ induced by $(K,e)$ as follows (see \cite{CCPMultifold})
\[
\|x\|_e:=\min\{\alpha \ge 0: x+\alpha e \in K, \; -x+\alpha e \in K\}.
\]
For the special case of the nonnegative orthant $\R^n_+$ this norm has a natural interpretation: it is easy to check that for $e = \matr{1 & \cdots & 1}\transp$ we obtain $\|\cdot \|_e = \|\cdot \|_{\infty}$. The geometric interpretation is shown in Figure~\ref{fig:orthant}.
\begin{figure}[ht]
	\centering
\begin{overpic}[width=0.45\textwidth,
]{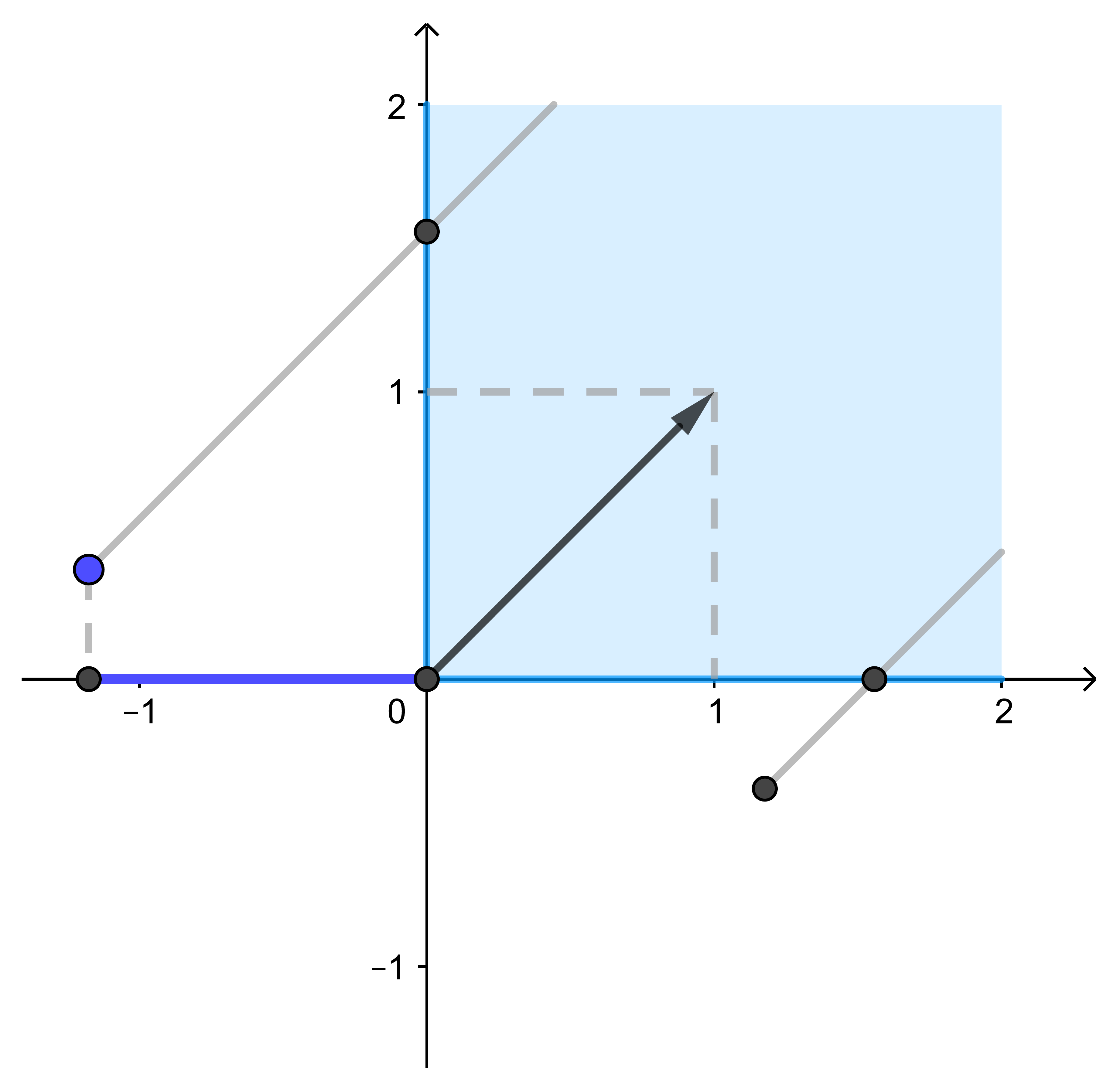}
 \put (12,45) {$x$}
 \put (70,25) {$-x$}
 \put (50,55) {$e$}
 \put (47,77) {$x+e\R_+$}
 \put (70,50) {$-x+e\R_+$}
 \put (8,25) {$\|x\|_e = \|x\|_\infty$}
\end{overpic}
\begin{overpic}[width=0.45\textwidth
]{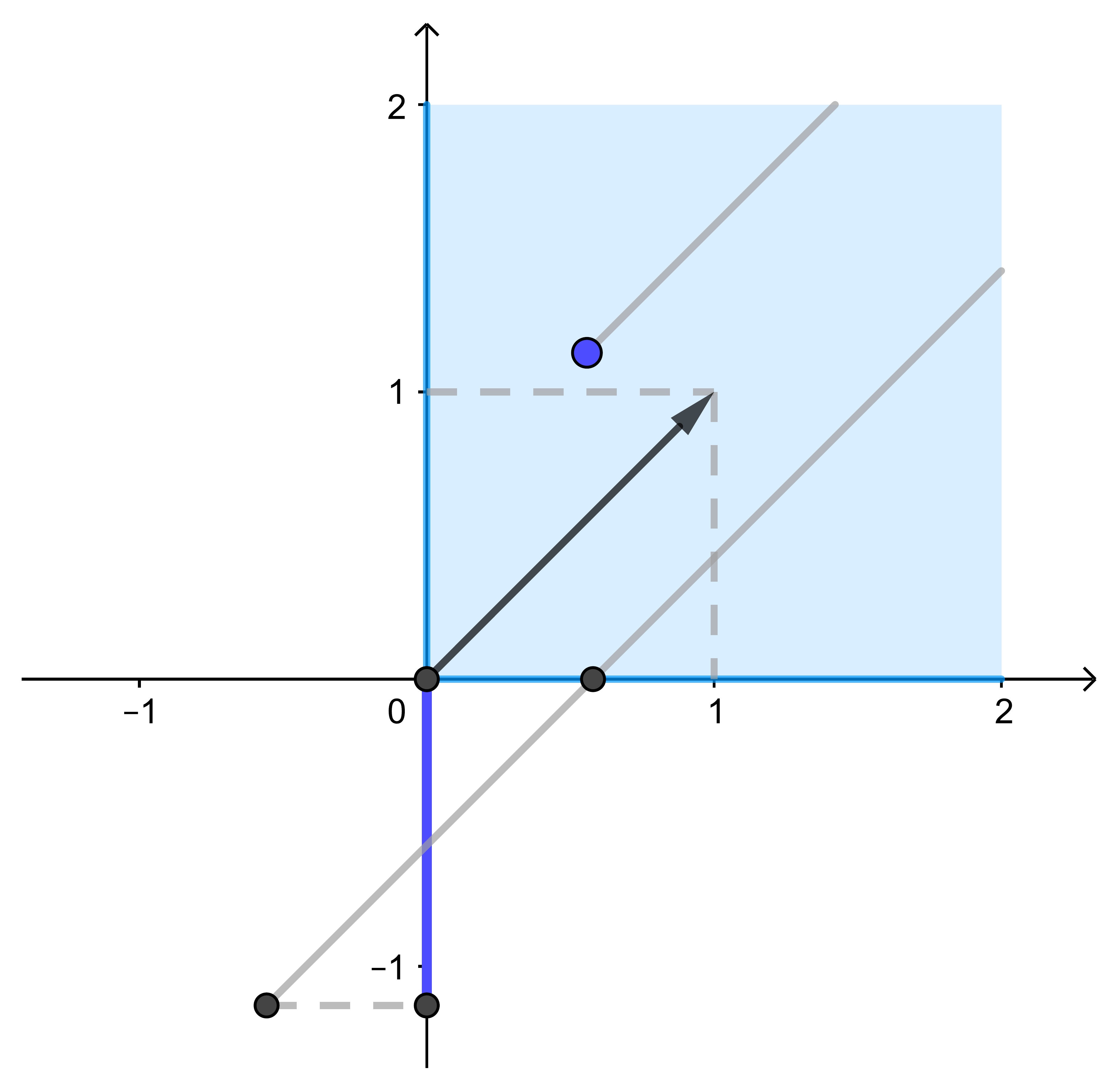}
 \put (45,68) {$x$}
 \put (12,10) {$-x$}
 \put (50,55) {$e$}
 \put (67,77) {$x+e\R_+$}
 \put (70,50) {$-x+e\R_+$}
 \put (42,14) {$\|x\|_e = \|x\|_\infty$}
\end{overpic}
	\caption{Induced norm for the nonnegative orthant.}
	\label{fig:orthant}
\end{figure}
Define the {\em eigenvalue} mapping $\lambda_{e}:E \rightarrow \R$ induced by $(K,e)$ as follows
\[
\lambda_{e}(x):=\max\{t\in \R: x-te \in K\}.
\]
Observe that $x \in K \Leftrightarrow \lambda_{e}(x) \ge 0$ and $x \in \interior(K) \Leftrightarrow \lambda_{e}(x) > 0.$  Furthermore, observe that when $x \in K$
\[
\lambda_e(x) = \max\{r\ge 0: v\in E, \|v\|_e \le r \Rightarrow x+v\in K\}.
\]
Thus for $x\in K$, $\lambda_e(x)$ is a measure of how interior $x$ is in the cone $K$.  

It is easy to see that  $\|u\|_e^* = \ip{u}{e}$ for $u\in K^*$.  In analogy to the standard simplex, let
\[
\Delta(K^*,e):=\{u\in K^*: \|u\|_e^*=1\} = \{u\in K^*:\ip{u}{e}=1\}.
\]
It is also easy to see that the  eigenvalue mapping $\lambda_{e}$ has the following alternative expression
\[
\lambda_{e}(x) = \min_{u\in \Delta(K^*,e)} \ip{u}{x}.
\]
The next result readily follows from Proposition~\ref{prop.nu} and convex duality.

\begin{proposition}\label{prop.eigenvalue}
If $\|\cdot\| = \|\cdot\|_e$, then for any linear subspace $L\subseteq E$
\[
\nu(L)= \min_{u\in \Delta(K^*,e)} \max_{x\in L\atop \|x\| \le 1} \ip{x}{u} = 
\max_{x\in L\atop \|x\| \le 1} \min_{u\in \Delta(K^*,e)} \ip{x}{u} = 
\max_{x\in L\atop \|x\| \le 1}  \lambda_{e}(x). 
\]
\end{proposition}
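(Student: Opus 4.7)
The plan is to split the stated chain of four expressions for $\nu(L)$ into three equalities and dispatch each one by a single ingredient: (i) the identification $\|u\|_e^*=\ip{u}{e}$ on $K^*$, already recorded in the text; (ii) Sion's minimax theorem; and (iii) the alternative characterization $\lambda_e(x)=\min_{u\in \Delta(K^*,e)}\ip{u}{x}$, also recorded just above the proposition.

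First I would invoke Proposition~\ref{prop.nu} to write
\[
\nu(L)=\min_{u\in K^*,\,\|u\|^*=1}\ \max_{x\in L,\,\|x\|\le 1}\ip{u}{x}.
\]
Taking $\|\cdot\|=\|\cdot\|_e$ and using $\|u\|_e^* = \ip{u}{e}$ for $u\in K^*$, the outer constraint $\|u\|^*=1$ becomes $u\in \Delta(K^*,e)$, which yields the first equality. The third equality is then immediate by substituting $\lambda_e(x) = \min_{u\in \Delta(K^*,e)}\ip{u}{x}$ inside the outer $\max_{x\in L,\,\|x\|\le 1}$.

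The central step is the minimax exchange. I would verify the hypotheses of Sion's theorem for the bilinear form $(u,x)\mapsto \ip{u}{x}$ on $\Delta(K^*,e)\times \{x\in L:\|x\|_e\le 1\}$. Both sets are convex. The unit ball $\{x\in L:\|x\|_e\le 1\}$ is closed and bounded in the finite-dimensional space $L$, hence compact. The set $\Delta(K^*,e)$ is convex, and it is compact because the standing regularity of $K$ forces $K^*$ to be pointed with nonempty interior, while $e\in \interior(K)$ gives $\ip{u}{e}>0$ on $K^*\setminus\{0\}$, so the hyperplane section $\{u\in K^*:\ip{u}{e}=1\}$ is bounded. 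Bilinearity supplies the continuity and the quasi-concavity/quasi-convexity in each argument, and Sion's theorem then delivers the second equality.

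I expect the only conceptually nontrivial ingredient to be the compactness verification just described; everything else is a bookkeeping substitution. Implicit throughout is the assumption $L\cap \interior(K)\ne \emptyset$ inherited from Proposition~\ref{prop.nu}, without which the initial representation of $\nu(L)$ as a minimax is not available to begin with.
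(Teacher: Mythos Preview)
Your argument is correct and matches the paper's own proof, which consists of the single sentence ``readily follows from Proposition~\ref{prop.nu} and convex duality''; you have simply filled in the details the paper omits. On your closing remark: the hypothesis $L\cap\interior(K)\ne\emptyset$ stated in Proposition~\ref{prop.nu} is in fact never used in its proof (Sion's theorem there needs only compactness of the unit ball), so the minimax identity---and hence the present proposition---holds for every linear subspace $L$ as claimed.
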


Proposition~\ref{prop.eigenvalue} in particular implies that when $L \cap \interior(K) \ne \emptyset$ the quantity $\nu(L)$ can be seen as a measure of the  most interior point in $L\cap  \interior(K)$.
We next illustrate  Proposition~\ref{prop.eigenvalue} in two important cases.  The first case is $E=\R^n$ with the usual dot inner product, $K = \R^n_+$ and $e = \matr{1 & \cdots & 1}\transp \in \R^n_+$.  In this case $ \|\cdot\|_e= \|\cdot\|_\infty, \, \|\cdot\|_e^* = \|\cdot\|_1$, $(\R^n_+)^*=\R^n_+$ and $\Delta(\R^n_+,e)$ is the standard simplex 
$\Delta_{n-1}:=\{x\in \R^n_+:\sum_{i=1}^n x_i = 1\}$.   Thus $\lambda_{e}(x) = \displaystyle\min_{i=1,\dots,n} x_i$ and for $\|\cdot\| = \|\cdot\|_e$ we have
\begin{equation}\label{eqn.nu.lp}
\nu(L)= \max_{x\in L\atop \|x\|\le1} \min_{j=1,\dots,n} x_j.
\end{equation}

The second special case is $E=\S^n$ with the trace inner product, $K = \S^n_+$ and $e = I \in \interior(\S^n_+)$.  In this case $\|\cdot\|_e $ and $\|\cdot\|_e^*$ are respectively the operator norm and the nuclear norm in $\S^n$.  More precisely
\[
\|X\|_e = \max_{i=1,\dots,n} |\lambda_i(X)|, \; \|X\|_e^* = \sum_{i=1}^n |\lambda_i(X)|,
\]
where  $\lambda_i(X),\; i=1,\dots,n$ are the usual eigenvalues of $X$.  Furthermore, $(\S^n_+)^*=\S^n_+$ and $\Delta(\S^n_+,I)$ is the {\em spectraplex} $\{X\in \S^n_+: \sum_{i=1}^n \lambda_i(X) = 1\}$.   Thus $\lambda_{e}(x) = \min_{j=1,\dots,n} \lambda_j(X).$ In addition, in a nice analogy to \eqref{eqn.nu.lp}, for $\|\cdot\| = \|\cdot\|_e$ we have
\begin{equation}\label{eqn.nu.sdp}
\nu(L)=\max_{X\in L\atop \|X\| \le 1}  \min_{j=1,\dots,n}\lambda_{j}(X).
\end{equation}

\section{Sigma measure}\label{sec:sigma}

The induced eigenvalue function discussed in Section~\ref{sec.induced.norm} can be defined more broadly.   Given $v\in K\setminus\{0\}$ define $\lambda_v:E\rightarrow [-\infty,\infty)$ as follows 
\[
\lambda_v(x):=\max\{t: x- tv \in K\}.
\]
Define the {\em sigma} condition measure of a linear subspace $L\subseteq E$ as follows
\begin{equation}\label{def.sigma.feas}
\sigma(L):=\min_{v\in K\atop \|v\|=1}  \max_{x\in L \atop \|x\| \le 1}\lambda_v(x).
\end{equation}
The quantity $\sigma(L)$ can be interpreted as a measure of the depth of $L\cap K$ within $K$ along all directions $v\in K$.  Proposition~\ref{prop.eigenvalue} and Proposition~\ref{corol.nu.sigma}(c) below show that $\sigma(L)$ coincides with the measure $\nu(L)$ of the most interior point in $L\cap K$ when $\|\cdot\| = \|\cdot\|_e$.

The construction~\eqref{def.sigma.feas} of $\sigma(L)$  can be seen as a generalization of the sigma measure introduced by Ye~\cite{Ye94}.  Observe that $L \cap \interior(K) \ne \emptyset$ if and only if $\sigma(L) > 0$.
Furthermore, in this case Proposition~\ref{corol.nu.sigma} below shows that the quantities $\sigma(L)$ and $\nu(L)$ are  closely related. 
To that end, we rely on the following analogue of Proposition~\ref{prop.nu}. 

\begin{proposition}\label{prop.dual.sym}  
Let $L\subseteq E$ be a linear subspace. Then
\begin{equation}\label{eq.sigma.norm}
\sigma(L) = 
\min_{v\in K, y \in L^\perp, u\in K^* \atop \|v\|=1,\ip{u}{v}=1} \|u-y\|^*.
\end{equation}
\end{proposition}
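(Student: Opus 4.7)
The plan is to mirror the structure of the proof of Proposition~\ref{prop.nu}: first rewrite $\lambda_v(x)$ as an infimum of linear functionals, then exchange a max-min via Sion's theorem, and finally invoke the inner identity already established in that earlier proof.

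First, I would derive the dual representation
\[
\lambda_v(x) \;=\; \min_{u\in K^*,\,\ip{u}{v}=1} \ip{u}{x}
\]
valid for every $v\in K\setminus\{0\}$ and every $x\in E$. This follows from bipolar duality: $x-tv\in K$ is equivalent to $\ip{u}{x-tv}\ge 0$ for all $u\in K^*$, i.e., $t\,\ip{u}{v}\le \ip{u}{x}$ for all such $u$; the supremum over feasible $t$ equals the right-hand side. The constraint set $\{u\in K^*:\ip{u}{v}=1\}$ is nonempty, because regularity of $K$ together with $v\ne 0$ forces the existence of $u\in K^*$ with $\ip{u}{v}>0$; otherwise $v$ would lie in the lineality space $K\cap(-K)=\{0\}$.

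Second, for each fixed $v\in K$ with $\|v\|=1$, I would apply Sion's minimax theorem to the bilinear form $(x,u)\mapsto \ip{u}{x}$ on the product $(L\cap\{x:\|x\|\le 1\})\times (K^*\cap\{u:\ip{u}{v}=1\})$. The first factor is convex and compact, the second is convex, and the function is continuous and linear in each variable, so Sion's theorem yields
\[
\max_{x\in L,\,\|x\|\le 1}\,\lambda_v(x) \;=\; \min_{u\in K^*,\,\ip{u}{v}=1}\;\max_{x\in L,\,\|x\|\le 1}\ip{u}{x}.
\]
The identity $\max_{x\in L,\,\|x\|\le 1}\ip{u}{x} = \min_{y\in L^\perp}\|u-y\|^*$ was already established inside the proof of Proposition~\ref{prop.nu}, and substituting it here gives
\[
\max_{x\in L,\,\|x\|\le 1}\,\lambda_v(x) \;=\; \min_{\substack{u\in K^*,\,y\in L^\perp\\\ip{u}{v}=1}}\|u-y\|^*.
\]

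Taking a further minimum over $v\in K$ with $\|v\|=1$ produces exactly \eqref{eq.sigma.norm}. The main technical obstacle is that $\{u\in K^*:\ip{u}{v}=1\}$ can be unbounded when $v$ lies on the boundary of $K$, so some care is needed about attainment and the applicability of Sion's theorem; fortunately, Sion's theorem only requires one side of the minimax to be over a compact convex set, which is precisely the side we control. A brief supplementary argument, using continuity of the outer objective and closedness of $K$ and $L^\perp$, then confirms that the outer infima are attained, so that ``$\min$'' rather than ``$\inf$'' is legitimate in \eqref{eq.sigma.norm}.
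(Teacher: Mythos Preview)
Your proof is correct and follows essentially the same route as the paper: a minimax swap followed by the inner identity from the proof of Proposition~\ref{prop.nu}, then a final minimization over $v$. The only cosmetic difference is that you first eliminate $t$ by deriving the pointwise representation $\lambda_v(x)=\min_{u\in K^*,\,\ip{u}{v}=1}\ip{u}{x}$ and then apply Sion's theorem on $\{x\in L:\|x\|\le 1\}\times\{u\in K^*:\ip{u}{v}=1\}$, whereas the paper keeps $t$ as a free variable, applies the minimax swap to $(x,t)\mapsto t+\ip{u}{x-tv}$, and lets the constraint $\ip{u}{v}=1$ emerge from the inner maximization over $t$.
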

\begin{proof} Assume $v\in K$ is fixed. The construction of $\lambda_v$ implies that
\begin{align}\label{eq.lambda}
\max_{x \in L \atop \|x\|\le 1} \lambda_v(x)
&= \max_{x \in L, t\in \R \atop \|x\|\le 1, x-tv\in K} t \notag \\ \notag&
= \max_{x \in L, t\in \R \atop \|x\|\le 1} \min_{u\in K^*} (t + \ip{u}{x-tv}) \\
&=  \min_{u\in K^*}\max_{x \in L, t\in \R \atop \|x\|\le 1} (t + \ip{u}{x-tv}) \\ \notag
& = \min_{u\in K^* \atop \ip{u}{v}=1} \max_{x\in L\atop \|x\|\le 1} \ip{u}{x} \\
& = \min_{u\in K^*,y\in L^\perp \atop \ip{u}{v}=1} \|u-y\|^*,\notag
\end{align}
where on the second line we used the von Neumann minimax theorem \cite{vNeumann} (also see \cite[Theorem 11.1]{Guler}), and the last step follows from the identity $\displaystyle
\max_{x\in L, \|x\|\le 1} \ip{u}{x}=\min_{y\in L^\perp}\|u-y\|^*$ established in the proof of Proposition~\ref{prop.nu}.  We thus get \eqref{eq.sigma.norm} by taking minimum in~\eqref{eq.lambda} over the set $\{v\in K: \|v\|=1\}$.
\end{proof}
\begin{proposition} 
\label{corol.nu.sigma} Let $L\subseteq E$  be a linear subspace such that $L\cap \interior(K)\ne \emptyset$.
\begin{description}
\item[(a)] For any norm $\|\cdot\|$ in $E$ the following holds
\[
1\le \min_{v\in K, u\in K^* \atop \|v\|=1,\ip{u}{v}=1} \|u\|^* \le \frac{\sigma(L)}{\nu(L)} \le 
 \frac{1}{
 \displaystyle \min_{u\in K^*\atop \|u\|^*=1} \max_{v\in K \atop \|v\|=1} \ip{u}{v}}. 
\]
\item[(b)] If $\|\cdot\| = \|\cdot\|_2$ then
\[
1 \le \frac{\sigma(L)}{\nu(L)} \le \frac{1}{ \cos(\Theta(K^*,K))}.
\]
where 
\[
\Theta(K^*,K):=\max_{u\in K^*\setminus\{0\}}\min_{v\in K\setminus\{0\}} \angle(u,v).
\]
In particular, if $K^*\subseteq K$  then
$\nu(L) = \sigma(L).$
\item[(c)] If $\|\cdot\| = \|\cdot\|_e$ then
\[
\sigma(L) = \nu(L).
\] 

\end{description}
\end{proposition}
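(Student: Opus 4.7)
My plan is to prove part (a) first, since both (b) and (c) follow from it by evaluating its extreme bounds for the specific norms. Throughout I will use the two dual characterizations
\[
\nu(L) = \min_{\substack{u\in K^*,\,y\in L^\perp\\ \|u\|^*=1}} \|u-y\|^*,\qquad
\sigma(L) = \min_{\substack{v\in K,\,u\in K^*,\,y\in L^\perp\\ \|v\|=1,\,\ip{u}{v}=1}} \|u-y\|^*,
\]
coming from \eqref{eq:defcondition} and Proposition~\ref{prop.dual.sym}. Because $L\cap\interior(K)\ne\emptyset$, both $\nu(L)$ and $\sigma(L)$ are strictly positive, so the ratio $\sigma(L)/\nu(L)$ is well defined.

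For (a) I handle the three inequalities separately. The leftmost one is immediate from H\"older's inequality \eqref{eq.holder}: every admissible pair satisfies $1=\ip{u}{v}\le \|u\|^*\|v\|=\|u\|^*$. For the middle inequality I take an optimal triple $(\bar v,\bar u,\bar y)$ for $\sigma(L)$; the pair $(\bar u/\|\bar u\|^*,\bar y/\|\bar u\|^*)$ is feasible for $\nu(L)$, which gives $\nu(L)\cdot\|\bar u\|^*\le\sigma(L)$, hence $\|\bar u\|^*\le\sigma(L)/\nu(L)$. For the rightmost inequality I take an optimal pair $(\tilde u,\tilde y)$ for $\nu(L)$ and, for each unit $v\in K$ with $\ip{\tilde u}{v}>0$, observe that $(v,\tilde u/\ip{\tilde u}{v},\tilde y/\ip{\tilde u}{v})$ is feasible for $\sigma(L)$; this produces $\sigma(L)\cdot\ip{\tilde u}{v}\le\nu(L)$ and, after maximizing over such $v$ and using the particular $\tilde u$ as one admissible choice in the outer minimum, the stated right bound follows. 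The main subtlety is ensuring some admissible $v$ with $\ip{\tilde u}{v}>0$ exists; this is where regularity of $K$ enters, since $\tilde u\in K^*\setminus\{0\}$ and $e\in\interior(K)$ force $\ip{\tilde u}{e}>0$, so $v:=e/\|e\|$ works.

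For (b) the Euclidean identity $\ip{u}{v}=\cos\angle(u,v)$ on unit vectors together with monotonicity of $\cos$ on $[0,\pi/2]$ rewrites the denominator of the rightmost bound of (a) as $\cos\Theta(K^*,K)$; if moreover $K^*\subseteq K$, then each $u\in K^*\setminus\{0\}$ is itself a witness giving angle $0$, so $\Theta(K^*,K)=0$ and the upper bound collapses to $1$, squeezing against the lower bound $1$. For (c), with $\|\cdot\|=\|\cdot\|_e$ I use the structural identities $\|u\|_e^*=\ip{u}{e}$ for $u\in K^*$, $\|e\|_e=1$, and $\{v\in K:\|v\|_e\le 1\}=K\cap(e-K)$. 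Choosing $v=e$ in the middle quantity of (a) shows that any $u\in K^*$ with $\ip{u}{e}=1$ already satisfies $\|u\|_e^*=1$, collapsing the lower bound to $1$. For the upper bound, any $u\in K^*$ with $\|u\|_e^*=\ip{u}{e}=1$ and any admissible $v$ satisfy $\ip{u}{v}=\ip{u}{e}-\ip{u}{e-v}\le\ip{u}{e}=1$, with equality at $v=e$, so the inner maximum is identically $1$ and hence the outer minimum is $1$ as well. Both extreme bounds in (a) thus equal $1$, forcing $\sigma(L)=\nu(L)$.
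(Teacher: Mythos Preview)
Your proof is correct and follows essentially the same approach as the paper: both use the dual characterizations of $\nu(L)$ and $\sigma(L)$ from Propositions~\ref{prop.nu} and~\ref{prop.dual.sym}, prove the three inequalities in (a) by rescaling an optimal point of one problem to obtain a feasible point of the other, and then derive (b) and (c) by evaluating the extremal bounds of (a) for the specific norms. Your explicit handling of the positivity issue $\ip{\tilde u}{v}>0$ via $e\in\interior(K)$, and your computation in (c) that both flanking quantities in (a) equal $1$ (rather than just the rightmost), are minor elaborations on the paper's argument but do not constitute a different route.
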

\begin{proof}
\begin{description}
\item[(a)] The first inequality is an immediate consequence of H\"older's inequality~\eqref{eq.holder}.  Next, from Proposition~\ref{prop.dual.sym} it follows that $\sigma(L) = \|\bar u - \bar y\|^*$ for some $\bar v \in K, \bar y\in L^\perp, \bar u\in K^*$ with $\|\bar v\| = 1, \ip{\bar u}{\bar v} = 1$.  Thus from the construction  of $\nu(L)$ we get
$$ 
\nu(L) \le \frac{\|\bar u - \bar y\|^*}{\|\bar u\|^*} \le \frac{\sigma(L)}{\displaystyle\min_{v\in K, u\in K^*\atop \|v\|=1,\ip{u}{v}=1}\|u\|^*}
$$ 
and hence the second inequality follows.

For the third inequality assume $\nu(L) = \|\hat u - \hat y\|^*$ for some $\hat u \in K^*, \hat y \in L^\perp$ with $\|\hat u\|^* = 1.$  Then by Proposition~\ref{prop.dual.sym} we get
\begin{align*}
\sigma(L)& = \min_{v\in K, y \in L^\perp, u\in K^* \atop \|v\|=1,\ip{u}{v}=1} \|u-y\|^* \leq \inf_{v\in K, y \in L^\perp, \atop \|v\|=1, \ip{\hat u}{v}\neq 0 } \left\|\frac{\hat u}{\ip{\hat u}{v}}-y\right\|^*\\
 & = \inf_{v\in K, y \in L^\perp, \atop \|v\|=1, \ip{\hat u}{v}\ne 0} \frac{\|\hat u-y\|^*}{\ip{\hat u}{v}}
 = \frac{\displaystyle\min_{y \in L^\perp} \|\hat u-y\|^*}{ \displaystyle\max_{v\in K\atop  \|v\|=1}  \ip{\hat u}{v}}\\
 & \leq  \frac{\|\hat u-\hat y\|^*}{\displaystyle \max_{v\in K\atop  \|v\|=1}  \ip{\hat u}{v}}. 
\end{align*}
Hence
\[
\sigma(L) \le \frac{\|\hat u - \hat y\|^*}{\displaystyle\max_{v\in K \atop \|v\|=1} \ip{\hat u}{v}} \le \frac{\nu(L)}{\displaystyle\min_{u\in K^*\atop \|u\|^*=1}\max_{v\in K \atop \|v\|=1} \ip{u}{v}}
\]
and the third inequality follows.
\item[(b)] The first inequality follows from part (a).  For the second inequality observe that since $\cos(\cdot)$ is decreasing in $[0,\pi]$
\begin{align*}
\cos(\Theta(K^*,K)) &= \min_{u\in K^*\setminus\{0\}}\max_{v\in K\setminus\{0\}} \cos(\angle(u,v)) \\
&= \min_{u\in K^*\setminus\{0\}}\max_{v\in K\setminus\{0\}} \frac{\ip{u}{v}}{\|u\|_2\cdot \|v\|_2} \\
&= \min_{u\in K^*\atop \|u\|_2 = 1}\max_{v\in K\atop \|v\|_2=1} \ip{u}{v}.
\end{align*}
The second inequality then follows from part (a) as well.

If in addition $K^*\subseteq K$ then $\Theta(K^*,K) = 0$ and consequently $\frac{\sigma(L)}{\nu(L)}=1$.

\item[(c)]  Since $\|\cdot\| = \|\cdot\|_e$, we have $\|e\| =1$ and $\|u\|^* = \ip{u}{e}$  for all $u\in K^*$.  Thus
$\displaystyle\min_{u\in K^*\atop \|u\|^*=1}\max_{v\in K \atop \|v\|=1} \ip{u}{v} \ge \displaystyle\min_{u\in K^*\atop \|u\|^*=1} \ip{u}{e} = 1.$
Therefore from part (a) it follows that $\frac{\sigma(L)}{\nu(L)} = 1.$ 
\end{description}

\end{proof}

The following example shows that the upper bound in Proposition~\ref{corol.nu.sigma}(b) is tight.  

\begin{example}\label{eg}  Let $E = \R^2$ be endowed with the dot inner product and let $K:=\{(x_1,x_2)\in E: \sin(\phi)x_2 \ge \cos(\phi)|x_1|\}$ where $\phi \in (0,\pi/2),$  $L = \{(x_1,x_2)\in E: x_1 = 0\}$, and $\|\cdot\| = \|\cdot\|_2$.
Then $K^*=\{(x_1,x_2)\in E: \cos(\phi) x_2 \ge \sin(\phi)|x_1|\}$ and $\nu(L) = \sin(\phi)$.  If $\phi \in (0,\pi/4)$ then $\sigma(L) = 1/(2\cos(\phi))$ and $\Theta(K,K^*) = \pi/2-2\phi$.  Hence for $\phi \in (0,\pi/4)$
  \[
\frac{\sigma(L)}{\nu(L)} = \frac{1}{2\sin(\phi)\cos(\phi)} = \frac{1}{\sin(2\phi)} = \frac{1}{\cos(\pi/2-2\phi)} = \frac{1}{\cos(\Theta(K,K^*))}.
\]  
On the other hand, if $\phi \in [\pi/4,\pi/2)$ then $\sigma(L) = \sin(\phi) = \nu(L),$ and $\Theta(K,K^*) = 0.$
\end{example}

\section{Symmetry measure}
\label{sec.symmetry.meas}
Next, we will consider a symmetry measure that has been used as a measure of conditioning~\cite{BellF08,BellF09b}.  This measure is defined as follows.   Given a set $S$ in a vector space such that $0\in S$, define
\begin{equation}\label{eq.sym.point}
\sym(0,S):= \max\{t\ge 0: w\in S \Rightarrow -tw \in S\}.
\end{equation}
Observe that $\sym(0,S) \in [0,1]$ with $\sym(0,S)=1$ precisely when $S$ is perfectly symmetric around $0$.  Furthermore, it is easy to see that
\begin{equation}\label{eq.sym.point.2}
\sym(0,S) = \min_{v\in S} \max_{x\in S} \{ t \ge 0: x+tv = 0\}.
\end{equation}
Define the analogous {\em symmetry} measure of the cone $K$ around the linear subspace $L$ as    
\begin{equation}\label{eq.sym.set}
\Sym(L,K):= \min_{v\in K\atop \|v\|\le 1} \max_{x\in K \atop \|x\|\le 1} \{ t \ge 0: x+tv\in L\}.
\end{equation}
The following proposition shows the equivalence between $\Sym(L,K)$ and the symmetry measure defined in~\cite{BellF08,BellF09b}.

\begin{proposition}\label{prop.symmetry} let $L\subseteq E$ be a linear subspace and $A:E\rightarrow F$ be such that $\ker(A) = L$.  Then
\[
\Sym(L,K)=\sym(0,S)
\]
for $S:=\{Ax: x\in K, \|x\|\le 1\}.$
\end{proposition}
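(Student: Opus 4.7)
The plan is to expand $\sym(0,S)$ via the identity \eqref{eq.sym.point.2}, then rewrite points of $S$ in terms of their preimages in $K \cap \{x \in E : \|x\| \le 1\}$ under $A$, and finally use $\ker(A) = L$ to recognize the resulting expression as $\Sym(L,K)$.

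Concretely, applying \eqref{eq.sym.point.2} to $S = \{Ax : x \in K,\, \|x\| \le 1\}$ gives
\[
\sym(0,S) = \min_{v \in S}\max_{x \in S}\{t \ge 0 : x + tv = 0\}.
\]
Every $v \in S$ has the form $Av'$ for some $v' \in K$ with $\|v'\| \le 1$, and likewise for $x$. Since the inner maximum depends on $v$ only through its value in $E$ (not through the choice of preimage), the outer min over $v \in S$ can be replaced by the min over $v' \in K$ with $\|v'\| \le 1$. For the inner maximum, the condition $x + tAv' = 0$ with $x \in S$ is the same as $-tAv' \in S$, which, upon writing $x = Ax'$ with $x' \in K$, $\|x'\| \le 1$, becomes $A(x' + tv') = 0$, i.e., $x' + tv' \in \ker(A) = L$. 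Substituting these rewrites into the outer min-max yields
\[
\sym(0,S) \;=\; \min_{v' \in K,\, \|v'\| \le 1}\;\max_{x' \in K,\, \|x'\| \le 1}\{t \ge 0 : x' + tv' \in L\} \;=\; \Sym(L,K),
\]
where the last equality is just \eqref{eq.sym.set}.

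The only subtle step is the reparametrization of the outer minimum from $v \in S$ to $v' \in K \cap \{w : \|w\| \le 1\}$. This is justified because the map $v' \mapsto Av'$ surjects $K \cap \{w : \|w\| \le 1\}$ onto $S$, so the two sets of objective values coincide; distinct preimages of the same $v$ produce the same inner-max value. Degenerate choices such as $v = 0$ lead to $+\infty$ in both formulations (any $t \ge 0$ works once $0 \in L \cap K \cap \{w : \|w\| \le 1\}$) and thus do not affect the minimum. I expect this bookkeeping to be the main—and very mild—obstacle; the remaining steps are direct substitutions.
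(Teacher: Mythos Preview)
Your proof is correct and follows essentially the same route as the paper: apply \eqref{eq.sym.point.2}, reparametrize the min and max over $S$ by preimages in $K\cap\{\|\cdot\|\le 1\}$, and use $\ker(A)=L$ to turn $Ax'+tAv'=0$ into $x'+tv'\in L$, which is exactly \eqref{eq.sym.set}. Your added remarks on the surjectivity of $v'\mapsto Av'$ and the degenerate case $v=0$ are sound and simply make explicit what the paper leaves implicit.
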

\begin{proof}
The construction of $S$ together with~\eqref{eq.sym.point.2} and~\eqref{eq.sym.set} imply that 
\begin{align*}
\sym(0,S) &= \min_{v\in K\atop \|v\|\le 1} \max_{x\in K \atop \|x\|\le 1}\{t\ge 0: Ax+tAv = 0\} \\
&= 
\min_{v\in K\atop \|v\|\le 1} \max_{x\in K \atop \|x\|\le 1}\{t\ge 0: x+tv \in L\}\\
& = \Sym(L,K).
\end{align*}
\end{proof}

Observe that $L \cap \interior(K) \ne \emptyset$ if and only if $\Sym(L,K) > 0$. It is also easy to see that $\Sym(L,K) \in [0,1]$ for any linear subspace $L$ and $\Sym(L,K) = 1$ precisely when $K$ is perfectly symmetric around $L$ in the following sense: for all $v\in K$ there exists $x\in K$ such that $x+v\in L$ and $\|x\|\le \|v\|$.

The following result relating the symmetry and sigma measures is a general version of \cite[Proposition 22]{EpelF02}.

\begin{theorem} 
\label{thm.symmetry}
Let $L\subseteq E$ be a linear subspace such that $L \cap \interior K \neq \emptyset$.  Then
\[
\frac{\Sym(L,K)}{1+\Sym(L,K)}  \le \sigma(L) \le \frac{\Sym(L,K)}{1-\Sym(L,K)},
\]
with the convention that the right-most expression above is $+\infty$ if $\Sym(L,K) = 1$.
If there exists  $e\in \interior(K^*)$ such that $\|z\| = \ip{e}{z}$ for all $z\in K$  then
\[
\frac{\Sym(L,K)}{1+\Sym(L,K)} = \sigma(L).
\]
\end{theorem}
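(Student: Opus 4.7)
The plan is to establish the two-sided bound via a common geometric substitution, and then observe that the hypothesis $\|z\|=\ip{e}{z}$ on $K$ forces $\|e\|^*=1$, which tightens the upper bound into equality. Throughout write $s:=\Sym(L,K)$ and $\sigma:=\sigma(L)$.

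For the lower bound $\sigma\ge s/(1+s)$, fix any $v\in K$ with $\|v\|=1$. By the definition of $\Sym$, there exist $x\in K$ with $\|x\|\le 1$ and $t\ge s$ (up to slack absorbed by a simple limit or compactness) such that $y:=x+tv\in L$. Then $y-tv=x\in K$ yields $\lambda_v(y)\ge t$, and $\|y\|\le 1+t$ by the triangle inequality. Positive homogeneity of $\lambda_v$ and the monotonicity of $r\mapsto r/(1+r)$ then give
\[
\lambda_v\!\left(\tfrac{y}{\|y\|}\right)=\tfrac{\lambda_v(y)}{\|y\|}\ge \tfrac{t}{1+t}\ge \tfrac{s}{1+s},
\]
so $\max_{x\in L,\,\|x\|\le 1}\lambda_v(x)\ge s/(1+s)$, and minimizing over $v$ gives $\sigma\ge s/(1+s)$.

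For the upper bound $\sigma\le s/(1-s)$, the argument is dual. Let $\bar v\in K$ with $\|\bar v\|=1$ attain the min in $\Sym$, and pick $\bar x\in L$ with $\|\bar x\|\le 1$ and $\lambda_{\bar v}(\bar x)\ge\sigma$, as guaranteed by the definition of $\sigma$ at $v=\bar v$. Put $z:=\bar x-\sigma\bar v\in K$. Then $\|z\|\le 1+\sigma$ by the triangle inequality, and $z\neq 0$ since $z=0$ would imply $\bar v\in L\cap K$, forcing the inner max in $\Sym$ at $\bar v$ to be infinite and contradicting $s\le 1$. Normalizing, $z/\|z\|\in K$ has unit norm and $z/\|z\|+(\sigma/\|z\|)\bar v=\bar x/\|z\|\in L$. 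The definition of $\Sym$ at $\bar v$ therefore gives $s\ge \sigma/\|z\|\ge\sigma/(1+\sigma)$, which rearranges to $\sigma\le s/(1-s)$.

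For the equality in the special case, the key step is to show that the hypothesis implies $\|e\|^*=1$. The bound $\|e\|^*\ge 1$ is immediate via any unit $x_0\in K$ since $\ip{e}{x_0}=\|x_0\|=1$. For $\|e\|^*\le 1$, fix $z_0\in \interior K$ with $\|z_0\|=1$ and suppose for contradiction some $w$ with $\|w\|\le 1$ satisfies $\ip{e}{w}>1$. Convexity of the unit ball keeps $p_\lambda:=(1-\lambda)z_0+\lambda w$ in the unit ball, and for small $\lambda>0$ continuity keeps $p_\lambda\in\interior K$; the hypothesis then gives $\|p_\lambda\|=\ip{e}{p_\lambda}=1+\lambda(\ip{e}{w}-1)>1$, a contradiction. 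With $\|e\|^*=1$ in hand, revisit the upper-bound argument: since $z\in K$,
\[
\|z\|=\ip{e}{z}=\ip{e}{\bar x}-\sigma\ip{e}{\bar v}=\ip{e}{\bar x}-\sigma,
\]
using $\ip{e}{\bar v}=\|\bar v\|=1$. Combined with $\ip{e}{\bar x}\le\|e\|^*\|\bar x\|\le 1$, this yields $\|z\|\le 1-\sigma$, so $s\ge\sigma/\|z\|\ge\sigma/(1-\sigma)$, i.e., $\sigma\le s/(1+s)$, matching the lower bound. The main obstacle is the $\|e\|^*=1$ step; once it is secured, the algebra simply parallels the generic two-sided bound.
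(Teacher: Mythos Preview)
Your proof is correct and follows essentially the same route as the paper's: both bounds come from the same normalize-and-substitute maneuver, and the special case is handled by sharpening the estimate $\|z\|\le 1+\sigma$ to $\|z\|\le 1-\sigma$.

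The one notable difference is how you obtain that sharper bound. You take a detour to establish $\|e\|^*=1$ and then invoke H\"older to get $\ip{e}{\bar x}\le \|e\|^*\|\bar x\|\le 1$. The paper bypasses this entirely by observing that $\bar x\in K$: since $\bar x=(\bar x-\sigma\bar v)+\sigma\bar v$ with both summands in the convex cone $K$, one has $\bar x\in K$ and hence $\ip{e}{\bar x}=\|\bar x\|$ directly from the hypothesis, yielding $\|z\|=\|\bar x\|-\sigma\|\bar v\|=1-\sigma$ exactly. Your $\|e\|^*=1$ argument is correct and mildly interesting in its own right, but it is not needed here. A smaller point: you assume the outer minimum in $\Sym(L,K)$ is attained at some $\bar v$, whereas the paper works with any $v$ whose inner maximum is strictly below $1$; your version is easily patched by approximation if attainment is in doubt.
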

\begin{proof}
To ease notation, let $s:= \Sym(L,K)$ and $\sigma := \sigma(L)$.  First we show that $\sigma\ge \frac{s}{1+s}$.  To that end, suppose $v\in K, \|v\|=1$ is fixed.  The construction~\eqref{eq.sym.set} implies that there exists $z\in K, \|z\|\le 1$ such that $z+sv \in L$. Observe that $z+sv \ne 0$ because $z,v\in K$ are non-zero and $s\ge 0.$  Thus $x:=\frac{1}{\|z+sv\|}(z+sv)\in L, \|x\|=1$ and
\[
\lambda_{v}(x) \ge \frac{s}{\|z+sv\|} \ge \frac{s}{\|z\|+s\|v\|} \ge \frac{s}{1+s}.
\]  
Since this holds for any $v\in K, \|v\|=1$, it follows that
$\sigma\ge \frac{s}{1+s}.$

Next we show that $\sigma\le\frac{s}{1-s}$.  Assume $s < 1$ as otherwise there is nothing to show.  Let $v\in K, \|v\| = 1$ be such that 
\begin{equation}\label{eq.sym.less.one}
\max_{x\in K\atop \|x\|\le 1}\{t \ge 0: x+tv \in L\} < 1.
\end{equation}
At least one such $v$ exists because $s = \Sym(L,K) < 1$.

It follows from the construction of $\sigma(L)$ that there exists $x \in L, \|x\| = 1$ such that $\lambda_v(x)\ge \sigma > 0$.  In particular, $x - \sigma v \in K$.  Furthermore, $x-\sigma v \ne 0$ as otherwise $v =\frac{1}{\sigma} x\in L$ and  $x+v \in L$ which would contradict \eqref{eq.sym.less.one}.  Thus $z:=\frac{x-\sigma v}{\|x-\sigma v\|} \in K, \|z\|=1$ and 
$z +\frac{\sigma}{\|x-\sigma v\|} v \in L$ with
$\frac{\sigma}{\|x-\sigma v\|} \ge \frac{\sigma}{1+\sigma}.$  Since this holds for any $v\in K, \|v\|=  1$ satisfying ~\eqref{eq.sym.less.one}, it follows that $s \ge \frac{\sigma}{1+\sigma}$ or equivalently $\sigma \le \frac{s}{1-s}$.

Next consider the special case when there exists $e \in \interior(K^*)$ such that $\|z\| = \ip{e}{z}$ for all $z\in K$.  In this case,
$\|x-\sigma v \| = \ip{e}{x-\sigma v} = \ip{e}{x}-\ip{e}{\sigma v} = \|x\| - \sigma \|v\| = 1-\sigma$ in the previous paragraph and so the second inequality can be sharpened to $s \ge \frac{\sigma}{1-\sigma}$ or equivalently $\sigma \le \frac{s}{1+s}$.
\end{proof}

We also have the following relationship between the distance to infeasibility and the symmetry measure. 

\begin{corollary} 
\label{corol.symmetry} Let $L\subseteq E$ be a linear subspace such that $L \cap \interior(K) \neq \emptyset$.  Then
\[
\min_{u\in K^*\atop \|u\|^*=1} \max_{v\in K \atop \|v\|=1} \ip{u}{v}\cdot \frac{ \Sym(L,K)}{1+\Sym(L,K)} \le \nu(L) \le \frac{\Sym(L,K)}{1-\Sym(L,K)}.
\]
In particular, if $\|\cdot\| = \|\cdot \|_2$ then 
\[
\cos(\Theta(K^*,K))\cdot \frac{ \Sym(L,K)}{1+\Sym(L,K)} \le \nu(L) \le \frac{\Sym(L,K)}{1-\Sym(L,K)}.
\]
\end{corollary}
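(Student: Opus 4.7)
The plan is to obtain this corollary as a straightforward consequence of chaining together Theorem~\ref{thm.symmetry}, which controls $\sigma(L)$ in terms of $\Sym(L,K)$, with Proposition~\ref{corol.nu.sigma}(a), which controls the ratio $\sigma(L)/\nu(L)$ in terms of a quantity depending only on the cones and the norm. Since everything needed is already assembled, there is no serious obstacle; the main task is to combine the inequalities in the correct direction.

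For the upper bound on $\nu(L)$, Proposition~\ref{corol.nu.sigma}(a) yields $\sigma(L)/\nu(L) \ge 1$, so $\nu(L) \le \sigma(L)$. Combining this with the upper bound $\sigma(L) \le \Sym(L,K)/(1-\Sym(L,K))$ from Theorem~\ref{thm.symmetry} (with the convention $+\infty$ when $\Sym(L,K)=1$) gives the claimed upper bound directly. For the lower bound on $\nu(L)$, the third inequality in Proposition~\ref{corol.nu.sigma}(a) rearranges to
\[
\nu(L) \;\ge\; \sigma(L)\cdot \min_{u\in K^*,\,\|u\|^*=1}\,\max_{v\in K,\,\|v\|=1}\ip{u}{v},
\]
and plugging in the lower bound $\sigma(L) \ge \Sym(L,K)/(1+\Sym(L,K))$ from Theorem~\ref{thm.symmetry} yields exactly the first inequality in the corollary.

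For the Euclidean specialization, I would invoke the identification derived inside the proof of Proposition~\ref{corol.nu.sigma}(b), namely
\[
\cos(\Theta(K^*,K)) \;=\; \min_{u\in K^*,\,\|u\|_2=1}\,\max_{v\in K,\,\|v\|_2=1} \ip{u}{v},
\]
which turns the general lower bound into the stated cosine form. The upper bound carries over unchanged since it does not depend on the norm. This completes the plan; no step requires new ideas beyond applying the two preceding results in tandem.
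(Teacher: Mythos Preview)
Your proposal is correct and follows essentially the same approach as the paper, which simply states that the result is an immediate consequence of Proposition~\ref{corol.nu.sigma} and Theorem~\ref{thm.symmetry}. You have merely spelled out the chaining of inequalities explicitly, including the Euclidean identification of the min--max quantity with $\cos(\Theta(K^*,K))$ from the proof of Proposition~\ref{corol.nu.sigma}(b).
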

\begin{proof} This is an immediate consequence of Proposition~\ref{corol.nu.sigma} and  Theorem~\ref{thm.symmetry}.
\end{proof}

\section{Variants $\overline \nu(L)$ and $\V(L)$ of $\nu(L)$}
\label{sec.extend}

Consider the following variant of $\nu(L)$ that places the normalizing constraint on $y\in L^\perp$ instead of $u\in K^*$:
\[
\overline \nu(L):=\min_{\substack{u\in K^*, y\in L^\perp\\\|y\|^*=1}} \|y-u\|^*.
\]
It is easy to see that $\overline \nu(L) = \nu(L) = \sin\angle(L^\perp,K^*)$ when $\|\cdot\|=\|\cdot\|_2$.  However,  
$\overline \nu(L)$ and $\nu(L)$ are not necessarily the same for other norms.  This fact highlights one of the interesting nuances of non-Euclidean norms.

Like $\nu(L)$, its variant $\overline \nu(L)$ is closely related to Renegar's distance to infeasibility as stated in Proposition~\ref{thm.renegar.again} below. 
Suppose $A:E \rightarrow F$ is a linear mapping and
consider the conic systems \eqref{primal} and \eqref{dual} defined by taking $L = \ker(A)$, that is,
\begin{equation}\label{renegar.primal.again}
 Ax = 0, \; x\in K  \setminus\{0\},
\end{equation}
and \begin{equation}\label{renegar.dual.again} 
A^*w \in K^* \setminus\{0\}.
\end{equation}
In analogy to $\dist(A,\I)$, define $\overline \dist(A,\I)$ as follows
\begin{align*}
\overline\dist(A,\I) &:= \inf\left\{\|A - \tilde A\|: \tilde Ax = 0, x \in K\setminus\{0\} \; \text{ is infeasible}\right\}
\\ 
& \; = \inf\left\{\|A - \tilde A\|: \tilde A^* w \in K^* \; \text{ for some } w \in F\setminus\{0\}\right\}.
\end{align*}

A straightforward modification of the proof of Theorem~\ref{thm.renegar} yields Proposition~\ref{thm.renegar.again}.  We note that this proposition requires  that $A$ be surjective.  This is necessary because $\overline \dist(A,\I) = 0$ whenever $A$ is not surjective whereas $\|A\|,\|A^{-1}\|,$ and  $\overline \nu(L)$ may all be positive and finite.  The surjectivity of $A$ can be evidently dropped if the definition of $\overline \dist(A,\I)$ is amended by requiring $\Image(\tilde A) = \Image(A)$.

\begin{proposition}\label{thm.renegar.again} 
Let $A \in \mathcal L(E,F)$ be a surjective linear mapping such that \eqref{renegar.primal.again} is strictly feasible and let $L:=\ker(A)$.  Then
\[
\frac{1}{\|A\|} \le \frac{\overline\nu(L)}{\overline\dist(A,\I)} \le \|A^{-1}\|.
\]
\end{proposition}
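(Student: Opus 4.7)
The plan is to mirror the proof of Theorem~\ref{thm.renegar} with the roles of $A$ and $A^*$ effectively interchanged: the new setup has $L = \ker(A)$, so $L^\perp = \Image(A^*)$, and the surjectivity of $A$ (equivalent to injectivity of $A^*$) replaces the implicit property used in the original argument.

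For the lower bound $\overline\dist(A,\I)\le \overline\nu(L)\|A\|$, I will pick minimizers $\bar u\in K^*$ and $\bar y\in L^\perp$ in the definition of $\overline\nu(L)$, so $\|\bar y\|^*=1$ and $\|\bar u-\bar y\|^* = \overline\nu(L)$. Since $L^\perp=\Image(A^*)$ and $A^*$ is injective, there is a unique nonzero $\bar w\in F$ with $A^*\bar w=\bar y$, and the adjoint norm identity $\|A^*\bar w\|^*\le \|A\|\cdot |\bar w|^*$ gives $|\bar w|^*\ge 1/\|A\|$. I then construct a rank-one perturbation through its adjoint, $\Delta A^*(w):=\phi(w)(\bar u-\bar y)$, where $\phi$ is a linear functional on $F$ with $\phi(\bar w)=1$ and dual norm $1/|\bar w|^*$ (Hahn--Banach). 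A direct computation gives $\|\Delta A\| = \overline\nu(L)/|\bar w|^*\le \overline\nu(L)\|A\|$ and $(A+\Delta A)^*\bar w = \bar y + (\bar u-\bar y)=\bar u\in K^*$, so that $\tilde A:=A+\Delta A$ is a valid competitor in the alternative formulation of $\overline\dist(A,\I)$.

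For the upper bound $\overline\nu(L)\le \|A^{-1}\|\,\overline\dist(A,\I)$, I will take any competitor $\tilde A$ with $\tilde A^*\bar w\in K^*$ for some $\bar w\in F\setminus\{0\}$. Since $A^*$ is injective, $A^*\bar w\ne 0$, so the vectors $\bar y := A^*\bar w/\|A^*\bar w\|^*\in L^\perp$ and $\bar u := \tilde A^*\bar w/\|A^*\bar w\|^*\in K^*$ are well defined with $\|\bar y\|^*=1$. Hence
\[
\overline\nu(L) \le \|\bar y - \bar u\|^* = \frac{\|(A-\tilde A)^*\bar w\|^*}{\|A^*\bar w\|^*}\le \frac{\|A-\tilde A\|\cdot|\bar w|^*}{\|A^*\bar w\|^*},
\]
and it remains to show $|\bar w|^*\le \|A^{-1}\|\,\|A^*\bar w\|^*$. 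This follows directly from the definition of $\|A^{-1}\|$: for each $y\in F$ with $|y|\le 1$ pick $x\in A^{-1}(y)$ with $\|x\|\le\|A^{-1}\|$ and compute $\ip{\bar w}{y}=\ip{A^*\bar w}{x}\le \|A^*\bar w\|^*\|A^{-1}\|$; taking the supremum over $y$ with $|y|\le 1$ bounds $|\bar w|^*$ as claimed. Taking the infimum over all competitors $\tilde A$ concludes.

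The main delicate aspect is the bookkeeping of the two norms $|\cdot|$ and $|\cdot|^*$ on $F$ and the careful assembly of the rank-one perturbation so that it realises the alternative $\tilde A^*w\in K^*$ side (rather than the primal side $\tilde Ax\in K\setminus\{0\}$). The surjectivity hypothesis on $A$ is essential, as it guarantees that the preimages $\bar w$ encountered in both parts are well defined and nonzero.
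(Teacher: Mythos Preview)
Your proposal is correct and follows essentially the same route as the paper's proof. The only cosmetic differences are that you phrase the rank-one perturbation via a Hahn--Banach functional $\phi$ while the paper writes it explicitly through a norming vector $\bar z\in F$ with $|\bar z|=1$ and $\ip{\bar w}{\bar z}=|\bar w|^*$ (these are the same object in finite dimensions), and that you spell out the inequality $|\bar w|^*\le \|A^{-1}\|\,\|A^*\bar w\|^*$ in the upper-bound step, which the paper asserts directly after normalizing $\|A^*\bar w\|^*=1$.
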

\begin{proof}
First, we prove $\overline\dist(A,\I) \le \overline\nu(L) \|A\|.$  To that end, let $\bar y \in L^\perp$ and $\bar u \in K^*$ be such that $\|\bar y\|^* =1$ and 
$
\overline\nu(L) = \|\bar y - \bar u\|^*$. Since $\bar y \in L^\perp = \Image(A^*)$ and $\|\bar y\|^* =1$, it follows that $\bar y = A^* \bar v$ for some $\bar v\in F$ with $|\bar v|^* \ge 1/\|A\|$.  Let $\bar z\in F$ be such that $|\bar z| = 1$ and $\ip{\bar v}{\bar z} = |\bar v|^* =1$.  
Now construct $\Delta A: E \rightarrow F$ as follows
\[
\Delta A(x):= \frac{\ip{\bar u-\bar y}{x}}{|\bar v|^*} \bar z.
\]
Observe that $\|\Delta A\| =  \|\bar y - \bar u\|^*/|\bar v|^* \le \nu(L)\|A\|,$ and $\Delta A^*: F \rightarrow E$ is defined by
\[
\Delta A^*(w) = \frac{\ip{w}{\bar z}}{|\bar v|^*} (\bar u - \bar y).
\]
In particular $(A+ \Delta A)^*\bar v = A^* \bar v + (\bar u - \bar y) = \bar u \in K^*$ and $\bar v \in F\setminus \{0\}.$  Therefore
\[
\overline \dist(A,\I) \le \|\Delta A\| \le \nu(L)\|A\|.
\]
Next, we prove $\overline\nu(L) \le \|A^{-1}\| \cdot \overline\dist(A,\I)$. To that end, suppose  
$\tilde A\in \mathcal L(E,F)$ is such that $\tilde A^* \bar w \in K^*$ for some $\bar w\in F\setminus\{0\}.$  Since $A$ is surjective, $A^*$ is one-to-one and thus $A^* \bar w \ne 0$.  Without loss of generality{\color{blue},} we may assume that $\|A^*\bar w\|^*=1$ and so $|\bar w|^* \le \|A^{-1}\|$.   
It thus follows that
\[
\overline\nu(L) \le \min_{u\in K^*}\|A^*\bar w - u\| \le \|A^*\bar w - \tilde A^* \bar w\|^*
\le \|A^{-1}\|\cdot \|\tilde A - A\|. 
\]
Since this holds for all $\tilde A\in \mathcal L(E,F)$ such that $\tilde A^* w \in K^*$ for some $w \in F\setminus\{0\}$, it follows that 
\[
\overline\nu(L) \le \|A^{-1}\| \cdot \overline\dist(A,\I).
\]
\end{proof}

\bigskip

Next, consider an extension $\V(L)$ of $\nu(L)$ obtained by de-coupling the normalizing constraint of $u \in K^*$ from the norm defining its distance to $L^\perp$.  More precisely, suppose $\vertiii{\cdot}$ is an additional norm in the space $E$ and consider the following extension of $\nu(L)$
\[
\V(L):=\min_{\substack{u\in K^*, y\in L^\perp\\\|u\|^*=1}} \vertiii{y-u}^*.
\]
Proceeding as in Proposition~\ref{prop.nu}, it is easy to see that 
$
\V(L)=\displaystyle\min_{u\in K^* \atop \|u\|^*=1} \max_{x\in L\atop \vertiii{x}\le 1} \ip{u}{x}.
$
Thus only the restriction of $\vertiii{\cdot}$ to $L$ matters for $\V(L)$.    The following proposition considers a special case when this additional flexibility is particularly interesting.

\begin{proposition}  Suppose $L = \Image(A)$ for some linear map $A:F\rightarrow E$. Define the norm $\vertiii{\cdot}$ in $L$ as follows
\begin{equation}\label{eq.norm.A}
\vertiii{x}:=\min_{w\in A^{-1}(x)} |w|,
\end{equation}
where $|\cdot|$ denotes the norm in $F$.  Then
$$\V(L) = \dist(A,\I).$$
\end{proposition}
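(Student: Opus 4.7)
The plan is to unfold the min-max characterization of $\V(L)$ (the analogue of Proposition~\ref{prop.nu} noted immediately above the statement) and to show that, for the specific quotient norm defined in~\eqref{eq.norm.A}, the inner maximum collapses to $|A^*u|^*$. Once that is done, the identity $\V(L)=\dist(A,\I)$ follows from a two-sided inequality that is essentially Renegar's classical characterization, recycling the perturbation constructed in the proof of Theorem~\ref{thm.renegar}.

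For the first step, I would observe that since the minimum in~\eqref{eq.norm.A} is attained in finite dimension, the unit ball of $\vertiii{\cdot}$ in $L$ coincides with $A(\{w\in F:|w|\le 1\})$. Therefore, for any $u\in E$,
\[
\max_{x\in L,\,\vertiii{x}\le 1}\ip{u}{x}=\max_{w\in F,\,|w|\le 1}\ip{u}{Aw}=\max_{w\in F,\,|w|\le 1}\ip{A^*u}{w}=|A^*u|^*.
\]
Plugging this into the min-max formula for $\V(L)$ yields $\V(L)=\displaystyle\min_{u\in K^*,\,\|u\|^*=1}|A^*u|^*$.

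For the second step I would verify both inequalities. To get $\dist(A,\I)\le\V(L)$, pick $\bar u\in K^*$ with $\|\bar u\|^*=1$ attaining the minimum and copy the construction from Theorem~\ref{thm.renegar}: choose $\bar v\in E$ with $\|\bar v\|=1$ and $\ip{\bar u}{\bar v}=1$, set $\Delta A(w):=-\ip{A^*\bar u}{w}\bar v$, and observe $\|\Delta A\|=|A^*\bar u|^*=\V(L)$ while $(A+\Delta A)^*\bar u=0$ with $\bar u\in K^*\setminus\{0\}$. For the reverse $\V(L)\le\dist(A,\I)$, take any admissible perturbation $\tilde A$ with $\tilde A^*\bar w=0$ for some $\bar w\in K^*$, $\|\bar w\|^*=1$; then $|A^*\bar w|^*=|(A-\tilde A)^*\bar w|^*\le\|A-\tilde A\|$ by H\"older's inequality~\eqref{eq.holder}, so $\V(L)\le|A^*\bar w|^*\le\|A-\tilde A\|$, and taking the infimum over $\tilde A$ gives the bound.

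I do not anticipate any serious obstacle here: the entire substance of the argument is the identification of the quotient-norm unit ball in $L$ with $A$ applied to the unit ball of $F$, after which $\V(L)$ becomes literally Renegar's classical quantity $\min_{u\in K^*,\|u\|^*=1}|A^*u|^*$. The only point warranting a word of care is the attainment of the minimum in~\eqref{eq.norm.A}, which is automatic since we work in finite-dimensional normed spaces throughout the paper.
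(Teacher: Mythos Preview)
Your proposal is correct and matches exactly what the paper intends by ``a straightforward tweak of the proof of Theorem~\ref{thm.renegar}'': the key observation that the $\vertiii{\cdot}$-unit ball in $L$ equals $A(\{w:|w|\le 1\})$ turns the two inequalities $\dist(A,\I)\le\nu(L)\|A\|$ and $\nu(L)\le\|A^{-1}\|\dist(A,\I)$ from that proof into equalities, collapsing the sandwich to $\V(L)=\dist(A,\I)$. You have simply made explicit the details the paper leaves to the reader.
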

\begin{proof}  This follows via a straightforward tweak of the proof of Theorem~\ref{thm.renegar}.  
\end{proof}

\medskip

The additional flexibility of $\V(L)$ also yields the following extension of
Proposition~\ref{prop.eigenvalue}:  If $\|\cdot\| = \|\cdot\|_e$ for some $e\in \interior(K)$ then for any linear subspace $L\subseteq E$ and any additional norm $\vertiii{\cdot}$ in $L$
\[
\V(L) = \max_{x\in L\atop \vertiii{x} \le 1}  \lambda_{e}(x). 
\]

The construction of $\sigma(L)$ can be extended in a similar fashion  by de-coupling the normalizing constraints of $v \in K$ and $x \in L$.     More precisely, let $\vertiii{\cdot}$ be an additional norm in $L$ and consider the following extension of $\sigma(L)$:
\[
\Sigma(L) := \min_{v\in K\atop \|v\|=1}  \max_{x\in L \atop \vertiii{x} \le 1}\lambda_v(x).
\]

The additional flexibility of $\Sigma(L)$ readily yields the extension of Proposition~\ref{corol.nu.sigma} to the more general case where $\nu(L)$ and $\sigma(L)$ are replaced with $\V(L)$ and $\Sigma(L)$ respectively for any additional norm $\vertiii{\cdot}$ in $L$.

\bigskip

Finally, consider the extension $\overline \V(L)$ of $\overline \nu(L)$ obtained by de-coupling the normalizing constraint of $y \in L^\perp$ from the norm defining its distance to $ K^*$.
Suppose $\vertiii{\cdot}$ is an additional norm in the space $L^\perp$ and consider the following extension of $\overline\nu(L)$:
\[
\overline\V(L):=\min_{\substack{u\in K^*, y\in L^\perp\\ \vertiii{y}^*=1}} \|y-u\|^*.
\]
To illustrate the additional flexibility of $\overline \V(L)$ consider the special case when $L = \ker(A)$ for some surjective linear mapping $A:E \rightarrow F$ and define the norm $\vertiii{\cdot}$ in $L^\perp$ as follows
\begin{equation}\label{eq.norm.A.again}
\vertiii{x}:=|Ax|,
\end{equation}
where $|\cdot|$ denotes the norm in $F$.  A straightforward tweak of the proof of Proposition~\ref{thm.renegar.again} shows that $\overline\V(L) = \overline\dist(A,\I)$ for this choice of norm.   

\section{Conclusion}

We propose an approach to integrate a variety of proposed condition 
 measures for a homogeneous conic system of the form
 \[
 \text{ find} \; x\in L\cap K \setminus\{0\}.
 \]  
 Our approach hinges on the following concept of {\em data-independent distance to infeasibility}:  
\[
\nu(L):=\min_{u\in K^*, y\in L^\perp\atop\|u\|^*=1} \|u-y\|^*.
\] 
This quantity is  based solely on the following three minimal components associated to a linear conic system:  the cone $K$, linear subspace $L$, and some underlying norm $\|\cdot\|$ in the ambient space.
 
The data-independent distance to infeasibility $\nu(L)$ is a non-Euclidean generalization of the Grassmannian condition measure introduced by Belloni and Freund~\cite{BellF09a}, and further extended by Amelunxen and B\"urgisser~\cite{AmelB12}.  The non-Euclidean flexibility allows us to establish a number of novel and interesting relationships among several popular condition measures whose exact relationship with each other was not fully understood before.  These measures include our new data-independent distance to infeasibility, Renegar's data-dependent condition measure, the Grassmanian condition measure, a measure of symmetry, a measure of most interior solution,  and a measure of depth.  The latter two measures are constructed via some canonical {\em induced eigenvalue mappings} and {\em induced norm} that feature key structural properties of the underlying cone.
 
Our main results provide valuable insight into the tradeoffs of different notions of conditioning and thus pave the road for improved algorithmic developments that are more effectively adept to the intrinsic difficulty of a problem instance.  In particular, our results readily suggest preconditioning and reconditioning techniques like those that underlie a variety of recent {\em rescaling} algorithms.

The following two natural variants of $\nu(L)$ offer additional flexibility and enable a tighter integration among different condition measures. The first one places the normalization on $y\in L^\perp$ instead of $u\in K^*$:
\[
\overline \nu(L):=\min_{u\in K^*, y\in L^\perp\atop\|y\|^*=1} \|u-y\|^*.
\]  
The second one adds a dimension of flexibility by allowing the use of different norms for the normalization of $u\in K^*$ and the difference $u-y$: 
\[
\V(L):=\min_{u\in K^*, y\in L^\perp\atop\|u\|^*=1} \vertiii{u-y}^*.
\]

\section*{Acknowledgements}

We are grateful to two anonymous referees for numerous suggestions on a previous version of this paper.  In particular, we thank one of the anonymous referees for suggesting the natural definition~\eqref{eq.sym.set} of $\Sym(L,K)$ in Section~\ref{sec.symmetry.meas}.

Javier Pe\~na's research has been funded by NSF grant CMMI-1534850.
Vera Roshchina is grateful to the Australian Research Council for continuous support via grants DE150100240 and DP180100602. 

\bibliographystyle{plain}

\end{document}